\providecommand{\U}[1]{\protect\rule{.1in}{.1in}}
\numberwithin{equation}{section}
\newcommand{\mA}{\mathbb{A}}
\newcommand{\mC}{\mathbb{C}}
\newcommand{\mD}{\mathbb{D}}
\newcommand{\mT}{\mathbb{T}}
\newcommand{\mZ}{\mathbb{Z}}
\newcommand{\inv}{{\textrm{inv }}}
\newtheorem{theorem}{Theorem}[section]
\newtheorem{lemma}[theorem]{Lemma}
\theoremstyle{definition}
\theoremstyle{definition}
\newtheorem{definition}[theorem]{Definition}
\theoremstyle{definition}
\begin{document}
\title[$\nu$-metric for $H^{\infty}$]{Reformulation of the extension of the $\nu$-metric for $H^{\infty}$}
\author{Marie Frentz and Amol Sasane}
\address{Department of Mathematics, London School of Economics, Houghton Street, London
WC2A 2AE, United Kingdom}
\email{e.m.frentz@lse.ac.uk, sasane@lse.ac.uk}

\begin{abstract}
The classical $\nu$-metric introduced by Vinnicombe in robust control theory
for rational plants was extended to classes of nonrational transfer functions
in \cite{BS12}. In \cite{Sas12}, an extension of the classical $\nu$-metric
was given when the underlying ring of stable transfer functions is the Hardy
algebra, $H^{\infty}$. However, this particular extension to $H^{\infty}$ did
not directly fit in the abstract framework given in \cite{BS12}. In this paper
we show that the case of $H^{\infty}$ also fits into the general abstract
framework in \cite{BS12} and that the $\nu$-metric defined in this setting is
identical to the extension of the $\nu$-metric defined in \cite{Sas12}. This
is done by introducing a particular Banach algebra, which is the inductive
limit of certain $C^{\ast}$-algebras.

\end{abstract}
\subjclass[2010]{Primary 93B36; Secondary 93D09, 46J15}
\keywords{$\nu$-metric, robust control, Hardy algebra, stabilization problem}
\maketitle

\section{Introduction}

The present paper deals with a fundamental problem in robust stabilization 
of linear control systems governed by PDEs/delay-differential equations. We 
 refer the uninitiated reader to the textbooks \cite{CZ}, \cite{ParB} (for an introduction to control theory 
in the PDE/delay-differential equation context using operator theoretic methods) and to the monograph  \cite{VinB} (for 
an introduction to robust control using frequency domain methods). 

We recall the general stabilization problem in
control theory. Suppose that $R$ is a commutative integral domain with
identity (thought of as the class of stable transfer functions) and let
$\mathbb{F}(R)$ denote the field of fractions of $R$ (thought of as the set of
unstable plants). The stabilization problem is then the following: given an
unstable plant transfer function $P\in\left(  \mathbb{F}(R)\right)  ^{p\times
m}$, find a stabilizing controller transfer function $C\in\left(
\mathbb{F}(R)\right)  ^{m\times p}$ such that
\[
H(P,C):=\left[
\begin{array}
[c]{l}%
P\\
I
\end{array}
\right]  \left(  I-CP\right)  ^{-1}\left[
\begin{array}
[c]{ll}%
-C & I
\end{array}
\right]  \in R^{(p+m)\times(p+m)}.
\]
Robust stabilization goes one step further; in many practical situations one
knows that the plant is merely an approximation of reality and therefore\ one
wishes that the controller $C$ not only stabilizes the nominal plant $P,$ but
also all plants $\widetilde{P}$, sufficiently close to $P$. A metric which
emerged from the need to define closeness of plants, is the so-called $\nu
$-metric, introduced by Vinnicombe in \cite{Vin93}, where it was shown that
stability is a robust property of the plant with respect to the $\nu$-metric.
However, $R$ was essentially taken to be the set of rational functions without
poles in the closed unit disk.

In \cite{BS12} the $\nu$-metric of Vinnicombe was extended in an abstract
manner, in order to cover the case when $R$ is a ring of stable transfer
functions of possibly infinite-dimensional systems. In particular, the set-up
for defining the abstract $\nu$-metric was as follows:

\begin{itemize}
\item[(A1)] $R$ is a commutative integral domain with identity.

\item[(A2)] $S$ is a unital commutative semisimple complex Banach algebra with
an involution $\cdot^{\ast}$, such that $R\subset S$.

\item[(A3)] With ${\text{inv }} S$ denoting the invertible elements of $S$,
there exists a map $\iota:{\text{inv }} S\rightarrow G$, where $(G,\star)$ is
an Abelian group with identity denoted by $\circ$, and $\iota$ satisfies:

\begin{itemize}
\item[(I1)] $\iota(ab)=\iota(a)\star\iota(b)$ for all $a,b\in{\text{inv }} S$,

\item[(I2)] $\iota(a^{\ast})=-\iota(a)$ for all $a\in{\text{inv }} S$,

\item[(I3)] $\iota$ is locally constant, that is, $\iota$ is continuous when
$G$ is equipped with the discrete topology.
\end{itemize}

\item[(A4)] $x\in R \bigcap{\text{inv }} S$ is invertible as an element of $R$
if and only if $\iota(x)=\circ$.
\end{itemize}

In \cite{BS12}, it was shown that the abstract $\nu$-metric defined in the
above framework (which is recalled in Definition~\ref{Def v-metric} below), is
a metric on the class of all stabilizable plants, and moreover, that
stabilizability is a robust property of the plant.

In \cite{Sas12}, an extension of the $\nu$-metric was given when $R=H^{\infty}
$, the Hardy algebra of bounded and holomorphic functions in the unit disk in
$\mathbb{C}$. However, the $\nu$-metric for $H^{\infty}$ which was defined
there, did not fit in the abstract framework of \cite{BS12} in a direct
manner. Indeed, the metric was defined with respect to a parameter $\rho$
(essentially by using the abstract framework specialized to the disk algebra
and looking at an annulus of radii $\rho$ and $1$), and then the limit as
$\rho\nearrow1$ was taken to arrive at a definition of an extended $\nu
$-metric.  (This is recalled in Definition~\ref{metric_from_Sas12} below.)

It is a natural question to ask if the extension of the $\nu$-metric for
$H^{\infty}$ given in \cite{Sas12} can be viewed as a special case of the
abstract framework in \cite{BS12} with an appropriate choice of the Banach
algebra $S$ and the index function $\iota$. In this paper we shall show that
this is indeed possible. Thus our result  gives further support to the abstract
framework developed in \cite{BS12}, and progress in the abstract framework of
\cite{BS12} would then also be applicable in particular to our specialization
when $R=H^{\infty}$. We will construct a unital commutative semisimple Banach
algebra $S$ and an associated index function $\iota:=W$ for which (A1)-(A4)
hold. Moreover, we prove that the resulting $\nu$-metric obtained as a result
of this specialization of the abstract $\nu$-metric defined in \cite{BS12} is
identical to the extension of the $\nu$-metric defined for $H^{\infty}$
previously in \cite{Sas12}.

The outline of the paper is as follows:

\begin{enumerate}
\item In Section~2 we introduce some notation.

\item In Section~3, when $R=H^{\infty}$, we construct a certain Banach
algebra, $S:=\underrightarrow{\lim}\;C_{b}(\mathbb{A}_{r})$, and an associated
index function $\iota:=W$ satisfying the assumptions (A1)-(A4).

\item In Section~4, we define the $\nu$-metric for $H^{\infty}$ obtained by
specializing the abstract $\nu$-metric of \cite{BS12} with these choices of
$R:=H^{\infty}$, $S:=\underrightarrow{\lim}\;C_{b}(\mathbb{A}_{r})$ and
$\iota:=W$.

\item In Section~5, we prove that the $\nu$-metric obtained for $H^{\infty}$
in this setup coincides with the $\nu$-metric for $H^{\infty}$ given in
\cite{Sas12}.

\item In Section~6, as an illustration of the computability of the
proposed $\nu$-metric, we give an example where we calculate the $\nu$-metric
when there is uncertainty in the location of the zero of the (nonrational)
transfer function.

\item Finally, in Section~7, we give the rationale behind our choice of $S$ by first 
showing that in the Hardy algebra context ($R=H^\infty$), 
some natural guesses for the sought pair $(S, \iota)$ fail. We also explore the intrinsic nature 
of our choice of $S=\underrightarrow{\lim}\;C_{b}(\mathbb{A}_{r})$, by realizing it 
as $C(X)$ for an appropriate compact Hausdorff space $X$, and by showing its relation  with $L^\infty(\mT)$. 
\end{enumerate}

$\;$ 

\noindent {\bf Acknowledgements:} The second author gratefully acknowledges 
several useful discussions with Professors Ronald Douglas and Raymond Mortini  
pertaining to Section~7. 

\section{Notation\label{SecNotation}}

In this section we will fix some notation which will be used throughout the article.

Let $\cdot^{\ast}$ denote the involution in the Banach algebra, mentioned in
(A2). For $F\in S^{p\times m}$, the notation $F^{\ast}\in S^{m\times p}$
denotes the matrix given by $(F^{\ast})_{ij}=\left(  F_{ji}\right)  ^{\ast}$
for $1\leq i\leq p$ and $1\leq j\leq m.$ Here $(\cdot)_{ij}$ is used to
denote  the entry in the $i$th row and $j$th column of a matrix.

Let $\mathbb{F}(R)$ denote the field of fractions of $R.$ Given a matrix
$P\in(\mathbb{F}(R))^{p\times m}$, a factorization $P=ND^{-1},$ where $N$ and
$D$ are matrices with entries from $R,$ is called a \emph{right coprime
factorization of} $P$ if there exist matrices $X, Y$ with entries from $R$,
such that $XN+YD=I_{m}$. If, in addition, $N^{\ast}N+D^{\ast}D=I_{m}$, then
the right coprime factorization is referred to as a \emph{normalized right
coprime factorization of} $P$.

Given a matrix $P\in(\mathbb{F}(R))^{p\times m}$, a factorization
$P=\widetilde{D}^{-1}\widetilde{N},$ where $\widetilde{D}$ and $\widetilde{N}$
are matrices with entries from $R,$ is called a \emph{left coprime
factorization of} $P$ if there exist matrices $\widetilde{X},\ \widetilde{Y}$
with entries from $R,$ such that $\widetilde{N}\widetilde{X}+\widetilde{D}
\widetilde{Y}=I_{p}.$ If, in addition, $\widetilde{N}\widetilde{N}^{\ast
}+\widetilde{D} \widetilde{D}^{\ast}=I_{p},$ then the left coprime
factorization is referred to as a \emph{normalized left coprime factorization
of} $P$.

Let $\mathbb{S}(R,p,m)$ denote the set of all elements $P\in(\mathbb{F}%
(R))^{p\times m}$ that possess normalized right- and left coprime
factorizations. For $P\in\mathbb{S}(R,p,m),$ with factorizations
$P=\widetilde{D}^{-1}\widetilde{N}=ND^{-1},$ $G$ and $\widetilde{G}$ are
defined by
\begin{equation}
G:=%
\begin{bmatrix}
N\\
D
\end{bmatrix}
\quad\text{ and }\quad\widetilde{G}:=%
\begin{bmatrix}
-\widetilde{D} & \widetilde{N}%
\end{bmatrix}
.\label{Gena}%
\end{equation}
Further, we will define a norm on matrices with entries in $S$ using the
Gelfand transform.

\begin{definition}
Let $\mathfrak{M}(S)$ denote the maximal ideal space of the Banach algebra
$S$. For a matrix $M\in S^{p\times m},$ we define
\begin{equation}
\|M\|_{S,\infty}=\max_{\varphi\in\mathfrak{M}(S)}\,\rule[-.6ex]{.13em}{2.3ex}
\, \mathbf{M}(\varphi)\,\rule[-.6ex]{.13em}{2.3ex}\,,\label{max}%
\end{equation}
where $\mathbf{M}$ denotes the entry-wise Gelfand transform of $M$, and
$\,\rule[-.6ex]{.13em}{2.3ex}\, \cdot\,\rule[-.6ex]{.13em}{2.3ex}\,$ denotes
the induced operator norm from $\mathbb{C}^{m}$ to $\mathbb{C}^{p}$. (For the
sake of concreteness, we assume that $\mathbb{C}^{m}$ and $\mathbb{C}^{p}$ are
both equipped with the usual Euclidean $2$-norm.)
\end{definition}

The maximum in \eqref{max} exists since $\mathfrak{M}(S)$ is a compact space
when equipped with the Gelfand topology, that is, the weak-$\ast$ topology
induced from $\mathcal{L}(S;\mathbb{C}),$ the set of continuous linear
functionals from $S$ to $\mathbb{C}$. Moreover, since $S$ is semisimple, the
Gelfand transform,
\[
\widehat{\cdot}:S\rightarrow\widehat{S}\subset C(\mathfrak{M}(S),\mathbb{C}),
\]
is an injective algebra homomorphism by the Gelfand-Naimark theorem.

\section{Validity of (A1)-(A4) with $R=H^{\infty}$, $S=\displaystyle \lim
_{\longrightarrow}C_{b}(\mathbb{A}_{r})$ and $\iota=W$}

In this section we construct a Banach algebra $S$ and an index function
$\iota$ such that the assumptions (A1)-(A4) are satisfied for $R=H^{\infty}$.

In order to construct $S$, we will use the notion of inductive limits of
$C^{\ast}$-algebras. We refer the reader to \cite[Section 2.6]{LOT99} and
\cite[Appendix L]{Weg93} for background on the inductive limit of $C^{\ast}$-algebras.

The \emph{Hardy algebra} $H^{\infty}$ consists of all bounded and holomorphic
functions defined on the open unit disk $\mathbb{D}:=\{z\in\mathbb{C}
:|z|<1\}$, with pointwise operations and the usual supremum norm
\[
\|f\|_{\infty}= \displaystyle \sup_{z\in\mathbb{D}}|f(z)|, \quad f\in
H^{\infty}.
\]
For given $r\in(0,1),$ let
\[
\mathbb{A}_{r}:=\{z\in\mathbb{C}:r<|z|<1\}
\]
denote the open annulus and let $C_{b}(\mathbb{A}_{r})$ be the $C^{\ast}
$-algebra of all bounded and continuous functions $f:\mathbb{A}_{r}
\rightarrow\mathbb{C}$, equipped with pointwise operations and the supremum
norm: for $f\in C_{b}(\mathbb{A}_{r})$ we define
\[
\|f\|_{L^{\infty}(\mathbb{A}_{r})}:=\sup_{z\in\mathbb{A}_{r}}|f(z)|.
\]
When $\mathbb{A}_{r}$ is implicitly understood we will write $\|\cdot
\|_{L^{\infty}}$ instead of $\|\cdot\|_{L^{\infty}(\mathbb{A}_{r})}.$
Moreover, for $0<r\leq R<1$ we define the map $\pi_{r}^{R}:C_{b}
(\mathbb{A}_{r})\rightarrow C_{b}(\mathbb{A}_{R})$ by restriction:
\[
\pi_{r}^{R}(f)=f|_{\mathbb{A}_{R}}, \quad f\in C_{b} (\mathbb{A}_{r}).
\]
Consider the family $\left(  C_{b}(\mathbb{A}_{r}),\pi_{r}^{R}\right)  $ for
$0<r\leq R<1$. We note that

\begin{itemize}
\item[(i)] $\pi_{r}^{r}$ is the identity map on $C_{b}(\mathbb{A}_{r})$, and

\item[(ii)] $\pi_{r}^{R}\circ\pi_{\rho}^{r}=\pi_{\rho}^{R}$ for all
$0<\rho\leq r\leq R<1.$
\end{itemize}

Now consider the $\ast$-algebra
\[
{\displaystyle\prod\limits_{r\in(0,1)}} C_{b}(\mathbb{A}_{r}),
\]
and denote by $\mathcal{A}$ its $\ast$-subalgebra consisting of all elements
$f=(f_{r})=(f_{r})_{r\in(0,1)}$ such that there is an index $r_{0}$ with
$\pi_{r}^{R}(f_{r})=f_{R}$ for all $0<r_{0}\leq r\leq R<1$. Since every
$\pi_{r}^{R}$ is norm decreasing, the net $(\|f_{r}\|_{L^{\infty}
(\mathbb{A}_{r})})$ is convergent and we define
\[
\|f\|:=\lim_{r\rightarrow1}\|f_{r}\|_{L^{\infty}(\mathbb{A}_{r})}.
\]
Clearly this defines a seminorm on $\mathcal{A}$ that satisfies the $C^{\ast}
$-norm identity, that is,
\[
\|f^{\ast}f\|=\|f\|^{2},
\]
where $\cdot^{\ast}$ is the involution, that is, complex conjugation, see
(\ref{involution}) below. Now, if $N$ is the kernel of $\|\cdot\|$, then the
quotient $\mathcal{A}/N$ is a $C^{\ast}$-algebra (and we denote the norm again
by $\|\cdot\|$). This algebra is the so-called \emph{direct} or
\emph{inductive limit} of $(C_{b}(\mathbb{A}_{r}),\pi_{r}^{R})$ and we denote
it by
\[
\underrightarrow{\lim}\;C_{b}(\mathbb{A}_{r}).
\]
To every element $f\in C_{b}(\mathbb{A}_{r_{0}}),$ we associate a sequence
$f_{1}=(f_{r})$ in $\mathcal{A}$, where
\begin{equation}
f_{r}=\left\{
\begin{array}
[c]{ll}%
0 & \text{if }0<r<r_{0},\\
\pi_{r_{0}}^{r}(f) & \text{if }r_{0}\leq r<1.
\end{array}
\right. \label{f1}%
\end{equation}
We also define a map $\pi_{r}:C_{b}(\mathbb{A}_{r})\rightarrow
\underrightarrow{\lim}\;C_{b}(\mathbb{A}_{r})$ by
\[
\pi_{r}(f):=[f_{1}],\quad f\in C_{b}(\mathbb{A}_{r}),
\]
where $[f_{1}]$ denotes the equivalence class in $\underrightarrow{\lim
}\;C_{b}(\mathbb{A}_{r})$ which contains $f_{1}$. We will use the fact that
the maps $\pi_{r}$ are in fact $\ast$-homomorphisms. We note that these maps
are compatible with the connecting maps $\pi_{r}^{R}$ in the sense that every
diagram shown below is commutative.
\[
\xymatrix{
C_b(\mA_r) \ar[r]^{\pi_r^R} \ar[rd]_{\pi_r} & C_b(\mA_R) \ar[d]^{\pi_R} \\ &
\displaystyle {\lim_{\longrightarrow} C_b(\mA_r)}
}
\]

\subsection{Verification of assumption (A2)}

We note that $\underrightarrow{\lim}\;C_{b}(\mathbb{A}_{r})$ is a complex
commutative Banach algebra with involution, see for instance \cite[Section
2.6]{LOT99}. The multiplicative identity arises from the constant function
$f\equiv1 $ in $C_{b}(\mathbb{A}_{0})$, that is, $\pi_{0}(f)$. Moreover, we
can define an involution in $C_{b}(\mathbb{A}_{r})$ by setting
\begin{equation}
(f^{\ast})(z):=\overline{f(z)},\ \ \ z\in\mathbb{A}_{r},\label{involution}%
\end{equation}
and this implicitly defines an involution of elements in
$\underrightarrow{\lim}\; C_{b}(\mathbb{A}_{r})$.

It remains to prove that $\underrightarrow{\lim}\;C_{b}(\mathbb{A}_{r})$ is
semisimple and that $H^{\infty}\subset\underrightarrow{\lim}\;C_{b}%
(\mathbb{A}_{r})$. That $\underrightarrow{\lim}\;C_{b}(\mathbb{A}_{r})$ is
semisimple follows immediately since all commutative $C^{\ast}$-algebras are
semisimple. To see that this is the case, recall that a Banach algebra is
\emph{semisimple} if its radical ideal (that is, the intersection of all its
maximal ideals) is zero. Since $\underrightarrow{\lim}\;C_{b}(\mathbb{A}_{r})$
is a $C^{\ast}$-algebra, the Gelfand-Naimark theorem asserts that the Gelfand
transform is an isometric isomorphism of $\underrightarrow{\lim}%
\;C_{b}(\mathbb{A}_{r})$ onto $C(\Delta)$, where $\Delta$ is maximal ideal
space of $\underrightarrow{\lim}\;C_{b}(\mathbb{A}_{r}).$ Since the maximal
ideal space of $C(\Delta)$ comprises just point evaluations at $x\in\Delta$,
$C(\Delta)$ is semisimple, and it follows that $\underrightarrow{\lim}%
\;C_{b}(\mathbb{A}_{r})$ is semisimple too.

Finally, note that there is a natural embedding of $H^{\infty}$ into
$\underrightarrow{\lim}\; C_{b}(\mathbb{A}_{r})$, namely
\begin{equation}
f\mapsto\pi_{0}(f):H^{\infty}\longrightarrow\underrightarrow{\lim}\; C_{b}
(\mathbb{A}_{r}).\label{injektiv}%
\end{equation}
This is an injective map since $\pi_{0}$ is linear and if $\pi_{0}(f)=[(0)] $
for $f\in H^{\infty}$, then
\[
\lim_{r\rightarrow1}\left(  \sup_{z\in\mathbb{A}_{r}}|f(z)|\right)  =0,
\]
and so, in particular, the radial limit
\[
\lim_{r\rightarrow1}f(re^{i\theta})=0
\]
for all $\theta\in\lbrack0,2\pi)$. By the uniqueness of the boundary function
for $H^{\infty}$ functions (see for example \cite[Theorem~17.18]{Rud}), this
implies that $f=0$ in $H^{\infty}$. Thus $H^{\infty}$ can be considered to be
a subset of $\underrightarrow{\lim}\; C_{b}(\mathbb{A}_{r})$ (via the
injective restriction of map $\pi_{0}$ to $H^{\infty}$).

\subsection{Verification of assumption (A3)}

We now construct an index function
\[
\iota: {\text{inv }} \left(  \underrightarrow{\lim}\; C_{b}(\mathbb{A}
_{r})\right)  \rightarrow G,
\]
with a certain choice of an Abelian group $(G,\star)$ satisfying (I1)-(I3). We
will take the Abelian group $(G,\star)$ to be the additive group
$(\mathbb{Z},+)$ of integers and we will define $\iota$ in terms of winding numbers.

Let $C(\mathbb{T})$ denote the Banach algebra of complex valued continuous
functions on the unit circle $\mathbb{T}:=\{z\in\mathbb{C}:|z|=1\}$. If
$f\in{\text{inv }} C(\mathbb{T})$, we denote by $w(f)\in\mathbb{Z}$ its
winding number, see for instance \cite[p. 57]{Ull08}. For $f\in{\text{inv }}
(C_{b}(\mathbb{A}_{\rho}))$ and for $0<\rho<r<1$ we define the map
$f^{r}:\mathbb{T}\rightarrow\mathbb{C}$ by
\[
f^{r}(\zeta)=f(r\zeta),\quad\zeta\in\mathbb{T}.
\]
If $f\in{\text{inv }} (C_{b}(\mathbb{A}_{\rho}))$, then $f^{r}\in{\text{inv }}
(C(\mathbb{T}))$, and this implies that $f^{r}$ has a well defined integral
winding number $w(f^{r})\in\mathbb{Z}$ with respect to $0$. In
\cite[Proposition 3]{Sas12} it is proved that for $f\in\ $inv$(C_{b}
(\mathbb{A}_{\rho}))$ and $0<\rho<r<r^{\prime}<1,$
\begin{equation}
w(f^{r})=w(f^{r^{\prime}}),\label{win}%
\end{equation}
by the local constancy of the winding number.

Let $[(f_{r})]\in{\text{inv }} (\underrightarrow{\lim}\ C_{b}(\mathbb{A}_{r}
))$. Then there exists $[(g_{r})]\in{\text{inv }} (\underrightarrow{\lim
}\;C_{b}(\mathbb{A}_{r}))$ such that
\[
\lbrack(f_{r})][(g_{r})]=[(1)].
\]
Thus there exist $(f_{r}^{1})\in\lbrack(f_{r})]$, $\left(  g_{r}^{1}\right)
\in\lbrack(g_{r})]$ and $r_{0}$ such that for $r\in(r_{0},1),$
\[
f_{r}^{1}g_{r}^{1}=1
\]
pointwise. In particular, the image $f_{r}^{1}(\mathbb{A}_{r}),$ of $f_{r}
^{1}$ is a set in $\mathbb{C}$ that is bounded away from zero, that is, there
exists a $\delta>0$ such that $f_{r}^{1}(\mathbb{A}_{r})\bigcap\{z\in
\mathbb{C}:|z|<\delta\}=\emptyset.$ If $(f_{r}^{2})\in\lbrack(f_{r})]$ is
another sequence, then there exists $\widetilde{r}_{0}\in(0,1)$ such that
$\sup_{z\in\mathbb{A}_{r}}|f_{r}^{1}(z)-f_{r}^{2}(z)|<\delta/2$ for all
$r\in(\widetilde{r}_{0},1).$ Therefore, if we let $r$ be such that
$\max\{r_{0},\widetilde{r}_{0}\}<r<1,$ we can look at the restrictions
\[
f_{r}^{1}|_{\mathbb{T}_{\rho}},\ f_{r}^{2}|_{\mathbb{T}_{\rho}},
\]
and these will have the same winding number, since the graph of $f_{r}
^{1}|_{\mathbb{T}_{\rho}}$is at least at a distance $\delta$ from the origin,
while the distance from $f_{r}^{2}|_{\mathbb{T}_{\rho}}$ to $f_{r}
^{1}|_{\mathbb{T}_{\rho}}$ is smaller than $\delta/2.$ Thus $f_{r}
^{2}|_{\mathbb{T}_{\rho}}$ must wind around the origin the same number of
times as $f_{r}^{1}|_{\mathbb{T}_{\rho}}$ and their winding numbers coincide,
see \cite[Proposition 4.12]{Ull08}. With this in mind, we define the map
$W:{\text{inv }} (\underrightarrow{\lim}\;C_{b}(\mathbb{A}_{r}))\rightarrow
\mathbb{Z}$ by
\begin{equation}
W(f)=\lim_{r\rightarrow1}w(f_{r}|_{\mathbb{T}_{\rho}}),\text{ for }
f=[(f_{r})]\in{\text{inv }} (\underrightarrow{\lim}\;C_{b}(\mathbb{A}
_{r})),\;\rho\in(r,1).\label{defi}%
\end{equation}
Since two sequences in the same equivalence class will have the same winding
number eventually as $r\nearrow1$, it is enough to consider only one of the
sequences in the equivalence class in \eqref{defi}. We take $\iota=W$. By
\eqref{win}, \eqref{defi} and the definition of winding numbers, it follows
that (I1) and (I2) hold. Finally, analogous to the proof of
\cite[Proposition~6]{Sas12}, it can be verified that assumption (I3) also holds.

\subsection{Verification of assumption (A4)}

In light of \eqref{injektiv}, we can view $H^{\infty}$ as a subset of
$\underrightarrow{\lim}\; C_{b}(\mathbb{A}_{r})$.

Let $f\in H^{\infty}\bigcap{\text{inv }} (\underrightarrow{\lim}\;
C_{b}(\mathbb{A}_{r}))$. First, assume that $f$ is invertible in $H^{\infty}$
and let $g\in H^{\infty}$ be its inverse. For each $r\in(0,1)$ we can define
$f^{r}(z):=f(rz)\in A(\mathbb{D}),$ and since $f$ is invertible in $H^{\infty
},$ $f^{r}$ is invertible in $A(\mathbb{D})$ and on $C(\mathbb{T)}$. By the
Nyquist criterion \cite[Lemma 5.2]{BS12} for $A(\mathbb{D})$ this implies that
$w(f^{r})=0$. Because of the homotopic invariance of winding numbers,
$w(f^{r})=w(f|_{\mathbb{T}_{r}}),$ and this implies that
\[
W(f)=\lim_{r\rightarrow1}w(f|_{\mathbb{T}_{r}})=0.
\]

Next, assume that $f\in H^{\infty}\bigcap{\text{inv }} (\underrightarrow{\lim
}\; C_{b}(\mathbb{A}_{r}))$ and that $W(f)=0$. Let $F=\pi_{0}(f)$ and let
$G\in\underrightarrow{\lim}\ C_{b}(\mathbb{A}_{r})$ be the inverse of $F$.
Again, for $r\in(0,1)$ we define $f^{r}(z):=f(rz)\in A(\mathbb{D}).$ Since
$\pi_{0}(f)\in\ $inv$(\underrightarrow{\lim}\ C_{b}(\mathbb{A}_{r})),$ we have
that $f^{r}\in\ $inv$(C(\mathbb{T}))$. We know that
\[
W(f)=\lim_{r\rightarrow1}w(f_{r}|_{\mathbb{T}_{\rho}})=0.
\]
Using the fact that the winding number is integer valued, and using the local
constancy of winding numbers, it follows that $w(f^{r}|_{\mathbb{T} })=0$ for
$r$ close enough to $1.$ Moreover, the Nyquist criterion referred to above
implies that $f^{r}$ is invertible in $A(\mathbb{D})$. In particular, this
means that $f(rz)\neq0$ for all $z\in\mathbb{D}$. Since this is the case for
all $r$ large enough, $f(z)\neq0$ for all $z\in\mathbb{D}$. This implies that
$f$ has a pointwise inverse, say $g$, and this $g$ is holomorphic. What
remains to be proved is that $g$ is bounded. To this end, we consider
$(f_{r})\in F$ as defined in \eqref{f1}, and its inverse $(g_{r})\in G$, and
we note that there exists $\rho\in(0,1)$ such that for all $r\in(\rho,1),$
\[
f_{r}(z)g_{r}(z)=f(z)g(z)=1,\quad z\in\mathbb{A}_{r}.
\]
The maximum modulus principle then gives us that
\[
\sup_{\mathbb{D}}|g(z)|=\sup_{\mathbb{A}_{r}}|g(z)|\leq\|g_{r}\|_{L^{\infty
}(\mathbb{A}_{r})}<+\infty.
\]
That is, $g$ is bounded and hence $g\in H^{\infty}.$

\medskip

Summarizing, in this section we have checked that with
\[
R:=H^{\infty},\quad S:=\underrightarrow{\lim}\;C_{b}(\mathbb{A}_{r}
),\quad\text{and}\quad\iota:=W,
\]
the assumptions (A1)-(A4) from \cite{BS12} (which we recalled at the outset)
are all satisfied, and so the abstract $\nu$-metric given in \cite{BS12} is
applicable when the ring of stable transfer functions is the Hardy algebra
$H^{\infty}$. In the next section, we will clarify the explicit form taken by
abstract $\nu$-metric in this specialization when $(R,S,\iota)=(H^{\infty
},\underrightarrow{\lim}\;C_{b}(\mathbb{A}_{r}),W)$.

\section{The abstract $\nu$-metric when $(R,S,\iota)=(H^{\infty}
,\displaystyle\lim_{\longrightarrow}C_{b}(\mathbb{A}_{r}),W)$}

We will now present the abstract $\nu$-metric from \cite{BS12}, applied to our
special case
\[
(R,S,\iota)=(H^{\infty},\underrightarrow{\lim}\;C_{b}(\mathbb{A}_{r}),W).
\]
Before doing so, we mention that we will use notation below which is 
analogous to the one given earlier in \eqref{Gena}: thus $G_i, {\widetilde{G}}_i$ $i=1,2$ below 
are understood to be 
$$
G_i:=
\begin{bmatrix}
N_i\\
D_i
\end{bmatrix}
\quad\text{ and }\quad{\widetilde{G}}_i:=
\begin{bmatrix}
-{\widetilde{D}}_i & {\widetilde{N}}_i
\end{bmatrix}, \quad i=1,2.
$$ 
The $\nu$-metric for stabilizable plants over $H^{\infty}$ is
then defined as follows.

\begin{definition}
\label{Def v-metric} For $P_{1},\ P_{2}\in\mathbb{S}(H^{\infty},p,m),$ with
normalized left/right coprime factorizations
\begin{align*}
P_{1}  &  =N_{1}D_{1}^{-1}=\widetilde{D}_{1}^{-1}\widetilde{N}_{1},\\
P_{2}  &  =N_{2}D_{2}^{-1}=\widetilde{D}_{2}^{-1}\widetilde{N}_{2},
\end{align*}
the $\nu$-metric $d_{\nu}$ is given by
\[
d_{\nu}(P_{1},P_{2})\!=\!\left\{
\begin{array}
[c]{ll}%
\!\!\!\Vert\widetilde{G}_{2}G_{1}\Vert_{\underrightarrow{\lim}\ C_{b}
(\mathbb{A}_{r}),\infty} & \text{if }\det(G_{1}^{\ast}G_{2})\in{\text{inv }
}(\underrightarrow{\lim}\;C_{b}(\mathbb{A}_{r}))\text{ and }W(\det(G_{1}
^{\ast}G_{2}))=0,\\
\!\!1 & \text{otherwise.}%
\end{array}
\right.
\]
\end{definition}

Although the normal coprime factorization is not unique for a given plant,
$d_{\nu} $ is still a well-defined metric on $\mathbb{S}(R,p,m)$; see
\cite[Theorem 3.1]{BS12}.

The next step is to show that, for $\widetilde{G}_{2}G_{1}\in(H^{\infty
})^{p\times m}$, the norm
\[
\Vert\widetilde{G}_{2}G_{1}\Vert_{\underrightarrow{\lim}\ C_{b}(\mathbb{A}
_{r}),\infty}
\]
above can be replaced by the usual $H^{\infty}$-norm $\Vert\widetilde{G}
_{2}G_{1}\Vert_{\infty}$ . This will simplify the calculation of the $\nu
$-metric, and will also help us to show that the $\nu$-metric defined above
and the extension of the $\nu$-metric for $H^{\infty}$ given in in
\cite{Sas12} are the same. We show the following, analogous to  the result in
\cite[Lemma~3.12]{Sas12}.

\begin{theorem}
\label{tumm} For $F\in(H^{\infty})^{p\times m},$ there holds
\[
\Vert\lbrack(F_{r})]\Vert_{\underrightarrow{\lim}\;C_{b}(\mathbb{A}
_{r}),\infty}=\Vert F\Vert_{\infty}:=\displaystyle\sup_{z\in\mathbb{D}%
}|F(z)|.
\]

\end{theorem}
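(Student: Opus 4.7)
The plan is to reduce the claim to the scalar maximum modulus principle after unpacking the left-hand side via the $C^*$-algebraic structure of $S := \underrightarrow{\lim}\;C_b(\mA_r)$.

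First I would identify the left-hand side concretely. Since each $C_b(\mA_r)$ is a commutative $C^*$-algebra and matrix amplification commutes with inductive limits of $C^*$-algebras (see \cite[Appendix L]{Weg93}), the norm $\|\cdot\|_{S,\infty}$ on $S^{p\times m}$ (which, being defined via the Gelfand transform on the commutative $C^*$-algebra $S$, coincides with the intrinsic $C^*$-matrix norm) is the inductive limit of the $C^*$-matrix norms on $(C_b(\mA_r))^{p\times m}$. Under the identification of $(C_b(\mA_r))^{p\times m}$ with $C_b(\mA_r;\mC^{p\times m})$, the latter norm is simply $\sup_{z \in \mA_r}\nm F_r(z) \nm$. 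Applied to the canonical sequence $F_r := F|_{\mA_r}$ representing $\pi_0(F) = [(F_r)]$, this yields
\[
\|[(F_r)]\|_{S,\infty} \;=\; \lim_{r \to 1}\;\sup_{z \in \mA_r}\nm F(z) \nm.
\]

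Next I would show that each term in this limit already equals $\|F\|_\infty$, so that the limit is trivial. Writing $\nm F(z) \nm = \sup_{\|u\|=\|v\|=1}|\langle F(z)u,v\rangle|$ for $u \in \mC^m$, $v \in \mC^p$, and swapping the two suprema, the claim reduces to the scalar statement that for any $g \in H^\infty$, $\sup_{z \in \mA_r}|g(z)| = \|g\|_\infty$, since $z \mapsto \langle F(z) u, v\rangle$ is a scalar bounded holomorphic function on $\mD$. This scalar statement is an immediate consequence of the maximum modulus principle: the function $s \mapsto \sup_{|z|=s}|g(z)|$ is non-decreasing on $(0,1)$, so its supremum over $(r,1)$ equals its supremum over $(0,1)$, which is $\|g\|_\infty$.

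The main obstacle is the first step, specifically the justification that the matrix norm on $S^{p\times m}$ is the limit as $r \nearrow 1$ of the pointwise operator norm suprema on $\mA_r$. This is purely operator-algebraic and rests on two standard facts from the theory of commutative $C^*$-algebras: the equality of the $C^*$-matrix norm with the supremum of pointwise operator norms for matrices over an algebra of continuous functions, and the commutativity of matrix amplification with $C^*$-inductive limits. Once this identification is in place, the maximum modulus argument immediately disposes of the theorem.
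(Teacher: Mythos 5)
Your proof is correct, and for the matrix case it takes a genuinely different route from the paper. Both arguments hinge on the identity
\[
\Vert[(F_r)]\Vert_{\underrightarrow{\lim}\;C_b(\mA_r),\infty}
=\lim_{r\rightarrow 1}\,\sup_{z\in\mA_r}\nm F(z)\nm ,
\]
but you obtain it as a black box from two standard operator-algebraic facts: that on matrices over a commutative $C^\ast$-algebra the Gelfand-defined norm $\max_{\varphi}\nm \mathbf M(\varphi)\nm$ is the (unique) $C^\ast$-matrix norm, and that matrix amplification commutes isometrically with $C^\ast$-inductive limits. The paper instead proves this compatibility by hand: it introduces $\sigma_{\max}$, constructs $\mu_1$ (the continuous function $\varphi\mapsto\sigma_{\max}(\widehat F(\varphi))$ on the maximal ideal space, pulled back via Gelfand--Naimark) and $\mu_2=[(\sigma_{\max}(F_r(\cdot)))]$ (built on the concrete annuli), and shows $\Vert\mu_1\Vert=\Vert\mu_2\Vert$ using the characteristic equation $\det(\mu^2 I-F^\ast F)=0$ together with an $\epsilon$-argument. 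Your version is shorter and arguably cleaner, provided the reader grants the cited facts (including the routine padding needed to make sense of the $C^\ast$-norm for rectangular $p\times m$ matrices); the paper's version is self-contained and stays entirely within the explicit picture of functions on shrinking annuli. In the last step the paper simply asserts $\lim_{r\rightarrow1}\sup_{z\in\mA_r}\nm F(z)\nm=\Vert F\Vert_\infty$ for $F\in(H^\infty)^{p\times m}$, whereas you actually justify it by writing $\nm F(z)\nm=\sup_{\|u\|=\|v\|=1}|\langle F(z)u,v\rangle|$, swapping suprema, and applying the scalar maximum modulus principle to $z\mapsto\langle F(z)u,v\rangle\in H^\infty$; that explicit reduction is a worthwhile addition.
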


\begin{proof}
Suppose first that $p=m=1$. Then
\begin{align*}
\|F\|_{\infty}  &  =\sup_{z\in\mathbb{D}}|F(z)| =\lim_{r\rightarrow1}
\sup_{z\in\mathbb{A}_{r}}|F(z)| =\|\pi_{0}(F)\|_{\underrightarrow{\lim}
\ C_{b}(\mathbb{A}_{r})}\\
&  =\max_{\varphi\in\mathfrak{M}(\underrightarrow{\lim}\; C_{b}(\mathbb{A}
_{r}))} \|\widehat{\pi_{0}(F)}(\varphi)\| =\|\pi_{0}
(F)\|_{\underrightarrow{\lim}\ C_{b}(\mathbb{A}_{r}),\infty.}%
\end{align*}
The last equality follows from the Gelfand-Naimark Theorem; see
\cite[Theorem~11.18]{Rud91}. This proves the theorem for $p=m=1.$

Let us assume that at least one of $p$ and $m$ are larger than $1$. To treat
this case, we introduce the notation $\sigma_{\max}(X)$ for $X\in
\mathbb{C}^{p\times m}$, denoting the largest singular value of $X,$ that is,
the square root of the largest eigenvalue of $XX^{\ast}$ (or $X^{\ast}X$). In
particular, we note that the map $\sigma_{\max}(\cdot):\mathbb{C}^{p\times
m}\rightarrow\lbrack0,\infty)$ is continuous. Let
\[
F=[(F_{r})]\in\left(  \underrightarrow{\lim}\;C_{b}(\mathbb{A}_{r})\right)
^{p\times m}.
\]
Then $\sigma_{\max}(\widehat{F}(\cdot))$ is a continuous function on the
maximal ideal space $\mathfrak{M}(\underrightarrow{\lim}\;C_{b}(\mathbb{A}%
_{r})),$ and so again by the Gelfand-Naimark Theorem, there exists an element
$\mu_{1}\in\underrightarrow{\lim}\ C_{b}(\mathbb{A}_{r})$ such that
\[
\widehat{\mu}_{1}(\varphi)=\sigma_{\max}(\widehat{F}(\varphi))\text{ \ \ for
all }\varphi\in\mathfrak{M}(\underrightarrow{\lim}\ C_{b}(\mathbb{A}_{r})).
\]
Define $\mu_{2}:=[(\sigma_{\max}(F_{r}(\cdot)))]\in\underrightarrow{\lim
}\;C_{b}(\mathbb{A}_{r}).$ For fixed $r$, we have that $\det((\mu_{2}%
)_{r})^{2}I-F_{r}^{\ast}F_{r})=0$ in $C_{b}(\mathbb{A}_{r})$, which implies
that $\det\left(  \mu_{2}^{2}I-F^{\ast}F\right)  =0$ in $\underrightarrow{\lim
}\;C_{b}(\mathbb{A}_{r})$. Taking Gelfand transforms, we obtain
\[
\det\left(  (\widehat{\mu}_{2}(\varphi))^{2}I-(\widehat{F}(\varphi))^{\ast
}(\widehat{F}(\varphi))\right)  =0
\]
and so $|\widehat{\mu}_{2}(\varphi)|\leq\sigma_{\max}\left(  \widehat{F}%
(\varphi)\right)  =\widehat{\mu}_{1}(\varphi)$ for all $\varphi\in
\mathfrak{M}(\underrightarrow{\lim}\;C_{b}(\mathbb{A}_{r}))$. Thus
\begin{equation}
\Vert\mu_{2}\Vert_{\underrightarrow{\lim}\ C_{b}(\mathbb{A}_{r})}\leq\Vert
\mu_{1}\Vert_{\underrightarrow{\lim}\ C_{b}(\mathbb{A}_{r})}.\label{rr1}%
\end{equation}
On the other hand, since
\[
\det\left(  (\widehat{\mu}_{1}(\varphi))^{2}I-(\widehat{F}(\varphi))^{\ast
}(\widehat{F}(\varphi))\right)  =0\text{ \ \ for all }\varphi\in
\mathfrak{M}(\underrightarrow{\lim}\;C_{b}(\mathbb{A}_{r})),
\]
it follows that $\det\left(  \mu_{1}^{2}I-F^{\ast}F\right)  =0$ in
$\underrightarrow{\lim}\ C_{b}(\mathbb{A}_{r})$. Hence, for all $\epsilon>0$,
there exists $r_{0}\in(0,1)$ such that for all $r>r_{0}$, $|(\mu_{1}%
)_{r}(z)|\leq\sigma_{\max}(F_{r}(z))+\epsilon=(\mu_{2})_{r}(z)+\epsilon$, for
$z\in\mathbb{A}_{r}$. So
\begin{equation}
\Vert\mu_{1}\Vert_{\underrightarrow{\lim}\;C_{b}(\mathbb{A}_{r})}\leq\Vert
\mu_{2}\Vert_{\underrightarrow{\lim}\ C_{b}(\mathbb{A}_{r})}+\epsilon
.\label{rr2}%
\end{equation}
As the choice of $\epsilon$ was arbitrary, (\ref{rr1}) and (\ref{rr2}) imply
that $\Vert\mu_{1}\Vert_{\underrightarrow{\lim}\;C_{b}(\mathbb{A}_{r})}%
=\Vert\mu_{2}\Vert_{\underrightarrow{\lim}\;C_{b}(\mathbb{A}_{r})}.$ Using
this observation, we have that
\begin{align*}
\Vert\lbrack(F_{r})]\Vert_{\underrightarrow{\lim}\;C_{b}(\mathbb{A}%
_{r}),\infty} &  =\max_{\varphi\in\mathfrak{M}(\underrightarrow{\lim}%
\;C_{b}(\mathbb{A}_{r}))}\widehat{\mu}_{1}(\varphi)\quad\text{(definition)}%
\phantom{\lim_{r\rightarrow1}}\\
&  =\Vert\mu_{1}\Vert_{\underrightarrow{\lim}\ C_{b}(\mathbb{A}_{r}%
)}\phantom{\lim_{r\rightarrow1}}\\
&  =\Vert\mu_{2}\Vert_{\underrightarrow{\lim}\ C_{b}(\mathbb{A}_{r}%
)}\phantom{\lim_{r\rightarrow1}}\\
&  =\lim_{r\rightarrow1}\Vert\sigma_{\max}(F_{r}(\cdot))\Vert_{L^{\infty
}(\mathbb{A}_{r})}\\
&  =\lim_{r\rightarrow1}\sup_{z\in\mathbb{A}_{r}}\sigma_{\max}(F_{r}(z))\\
&  =\lim_{r\rightarrow1}\sup_{z\in\mathbb{A}_{r}}\,\rule[-0.6ex]%
{0.13em}{2.3ex}\,F_{r}(z)\,\rule[-0.6ex]{0.13em}{2.3ex}\,\\
&  =\Vert F\Vert_{\infty}\quad\text{(since }F\in(H^{\infty})^{p\times
m}\text{)}\phantom{\lim_{r\rightarrow1}}
\end{align*}
This completes the proof.
\end{proof}

Hence abstract $\nu$-metric from \cite{BS12}, when applied to our special
case
\[
(R,S,\iota)=(H^{\infty},\underrightarrow{\lim}\;C_{b}(\mathbb{A}_{r}),W)
\]
now takes the following explicit form.

\begin{definition}
\label{Def v-metric 2} For $P_{1},\ P_{2}\in\mathbb{S}(H^{\infty},p,m),$ with
normalized left/right coprime factorizations
\begin{align}
P_{1}  &  =N_{1}D_{1}^{-1}=\widetilde{D}_{1}^{-1}\widetilde{N}_{1},\nonumber\\
P_{2}  &  =N_{2}D_{2}^{-1}=\widetilde{D}_{2}^{-1}\widetilde{N}_{2}
,\label{cop 2}%
\end{align}
the $\nu$-metric $d_{\nu}$ is given by
\begin{equation}
d_{\nu}(P_{1},P_{2})\!=\!\left\{
\begin{array}
[c]{ll}%
\!\!\!\Vert\widetilde{G}_{2}G_{1}\Vert_{\infty} & \text{if }\det(G_{1}^{\ast
}G_{2})\in{\text{inv }} (\underrightarrow{\lim}\;C_{b}(\mathbb{A}_{r}))\text{
and }W(\det(G_{1}^{\ast}G_{2}))=0,\\
\!\!1 & \text{otherwise.}%
\end{array}
\right. \label{abstract_2}%
\end{equation}

\end{definition}

\section{The $\nu$-metric for $H^{\infty}$ given by (\ref{abstract_2})
coincides with the one given in \cite{Sas12}}

The aim of this section is to prove that the extension of the $\nu$-metric
given in \cite{Sas12} coincides with the $\nu$-metric given by
\eqref{abstract_2}, which, as we have seen in the previous section, is a
specialization of the abstract $\nu$-metric defined in \cite{BS12} when
$(R,S,\iota)=(H^{\infty},\underrightarrow{\lim}\;C_{b}(\mathbb{A}_{r}),W)$.

Let us first recall the extension of the $\nu$-metric for $H^{\infty}$ given
in \cite{Sas12}. In order to distinguish it from the metric $d_{\nu}$ given by
\eqref{abstract_2}, we denote the metric from \cite{Sas12} by $\widetilde{d}
_{\nu}$.

\begin{definition}
\label{metric_from_Sas12} For $P_{1},\ P_{2}\in\mathbb{S}(H^{\infty},p,m),$
with normalized left/right coprime factorizations as in \eqref{cop 2}, let
\[
\widetilde{d}_{\nu}^{\rho}(P_{1},P_{2}):=\left\{
\begin{array}
[c]{ll}%
\Vert\widetilde{G}_{2}G_{1}\Vert_{\infty} & \text{if }\det(G_{1}^{\ast}
G_{2})\in{\text{inv }} C_{b}(\mathbb{A}_{\rho})\text{ and }w(\det(G_{1}^{\ast
}G_{2})|_{\mathbb{T}_{r}})=0,\ r\in(\rho,1),\\
1 & \text{otherwise.}%
\end{array}
\right.
\]
Then, the extended $\nu$-metric for $H^{\infty}$ is defined by
\begin{equation}
\widetilde{d}_{\nu}(P_{1},P_{2}):=\lim_{\rho\rightarrow1}\widetilde{d}
_{v}^{\rho}(P_{1},P_{2}).\label{extended}%
\end{equation}

\end{definition}

\begin{theorem}
On the set $\mathbb{S}(H^{\infty},p,m)$ of stabilizable plants, the metric
$d_{\nu}$ given by in \eqref{abstract_2}, and the metric $\widetilde{d}_{v}$
given by \eqref{extended}, coincide.
\end{theorem}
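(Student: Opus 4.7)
The plan is to compare $d_\nu(P_1,P_2)$ and $\widetilde{d}_\nu(P_1,P_2)$ case by case, showing that the two definitions always land in the same branch (the norm branch or the constant~$1$ branch), and that when they land in the norm branch the values agree. Since $\widetilde{G}_2 G_1 \in (H^\infty)^{p \times m}$ and both definitions record exactly $\|\widetilde{G}_2 G_1\|_\infty$ in the nontrivial branch (using Theorem~\ref{tumm} for $d_\nu$), the heart of the proof is to translate the condition ``$\det(G_1^* G_2) \in \text{inv }(\underrightarrow{\lim}\; C_b(\mathbb{A}_r))$ and $W(\det(G_1^* G_2)) = 0$'' into the corresponding condition appearing in the definition of $\widetilde{d}_\nu^\rho$ for all $\rho$ sufficiently close to~$1$.

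First I would record the following characterization, which is immediate from the construction of the inductive limit: a canonical sequence $(f_r)$ represents an element of $\text{inv }(\underrightarrow{\lim}\; C_b(\mathbb{A}_r))$ if and only if there exists $\rho_0 \in (0,1)$ such that $f_\rho$ is invertible in $C_b(\mathbb{A}_\rho)$ for every $\rho \in (\rho_0,1)$. Combined with \eqref{win} and the definition \eqref{defi} of $W$, this shows that whenever $\det(G_1^* G_2)$ is invertible in $\underrightarrow{\lim}\; C_b(\mathbb{A}_r)$, the integer $w(\det(G_1^* G_2)|_{\mathbb{T}_r})$ is independent of $r$ on $(\rho_0,1)$ and equals $W(\det(G_1^* G_2))$.

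With these in place the two cases are immediate. If $d_\nu(P_1,P_2) = \|\widetilde{G}_2 G_1\|_\infty$, then for every $\rho \in (\rho_0,1)$ the defining conditions of $\widetilde{d}_\nu^\rho$ are met, so $\widetilde{d}_\nu^\rho(P_1,P_2) = \|\widetilde{G}_2 G_1\|_\infty$ for all such $\rho$, and passing to the limit as $\rho \nearrow 1$ yields $\widetilde{d}_\nu(P_1,P_2) = \|\widetilde{G}_2 G_1\|_\infty$. If instead $d_\nu(P_1,P_2) = 1$, then either $\det(G_1^* G_2)$ is not invertible in the inductive limit---and then by the characterization above it fails to be invertible in $C_b(\mathbb{A}_\rho)$ for every $\rho \in (0,1)$, so $\widetilde{d}_\nu^\rho = 1$ for all $\rho$---or it is invertible but $W(\det(G_1^* G_2)) \neq 0$, in which case by the constancy observation $w(\det(G_1^* G_2)|_{\mathbb{T}_r}) \neq 0$ for $r$ near~$1$, so $\widetilde{d}_\nu^\rho = 1$ for $\rho$ near~$1$. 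Either way the limit equals~$1$.

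There is no serious obstacle beyond careful bookkeeping: one must distinguish invertibility in $\underrightarrow{\lim}\; C_b(\mathbb{A}_r)$ from invertibility in a fixed $C_b(\mathbb{A}_\rho)$, and match the integer $W(f)$ with the eventually constant value of $w(f|_{\mathbb{T}_r})$. All the analytic content is already absorbed into Theorem~\ref{tumm} (which identifies the inductive-limit norm with the usual $H^\infty$-norm on $(H^\infty)^{p\times m}$) and into the construction of $W$ carried out in Section~3.
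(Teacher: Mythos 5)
Your proposal is correct and follows essentially the same route as the paper: both reduce the statement to the equivalence between invertibility of $\det(G_1^{\ast}G_2)$ in $\underrightarrow{\lim}\;C_{b}(\mathbb{A}_{r})$ with $W=0$ and its eventual invertibility in $C_{b}(\mathbb{A}_{\rho})$ with vanishing winding numbers $w(\cdot|_{\mathbb{T}_r})$ for $r$ near $1$, and both invoke Theorem~\ref{tumm} to identify the norms. Your organization as a single case analysis on which branch $d_{\nu}$ lands in (rather than the paper's two implications from $d_{\nu}<1$ and from $\widetilde{d}_{\nu}<1$) is equivalent, with the only unstated point being the trivial monotonicity that invertibility in $C_b(\mathbb{A}_\rho)$ persists for larger $\rho$.
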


\begin{proof}
First, suppose that $d_{\nu}(P_{1},P_{2})<1$. Then we have $\det(G_{1}^{\ast
}G_{2} )\in{\text{inv }} (\underrightarrow{\lim}\;C_{b}(\mathbb{A}_{r}))$ and
$W(\det(G_{1}^{\ast}G_{2}))=0$. Since $\det(G_{1}^{\ast}G_{2})$, viewed as an
element of $\underrightarrow{\lim}\;C_{b}(\mathbb{A}_{r})$ via the map
$\pi_{0}$, belongs to ${\text{inv }} (\underrightarrow{\lim}\;C_{b}
(\mathbb{A}_{r}))$, there exists an equivalence class $[(F_{r})]\in{\text{inv
}} (\underrightarrow{\lim}\;C_{b}(\mathbb{A}_{r}))$ such that $\det
(G_{1}^{\ast}G_{2})\cdot\lbrack(F_{r})]=[(1)]$. In particular, this means that
for $r\in(0,1)$ large enough $\det(G_{1}^{\ast}G_{2})$ is bounded and bounded
away from zero. Hence, there exists $F_{r}\in C_{b}(\mathbb{A}_{r})$ such that
$\det(G_{1}^{\ast}(z)G_{2}(z))F_{r}(z)=1$, for $z\in\mathbb{A}_{r}$. That is,
for $\rho$ large enough $\det(G_{1}^{\ast}G_{2})\in{\text{inv }}
(C_{b}(\mathbb{A}_{\rho})).$ Moreover, if $W(\det(G_{1}^{\ast}G_{2}))=0,$
then, arguing as when we verified assumption (A3),
\[
\lim_{r\rightarrow1}w(\det(G_{1}^{\ast}(z)G_{2}(z))|_{\mathbb{T}_{r}})=0.
\]
Due to the local constancy of the winding number, this means that
$w(\det(G_{1}^{\ast}G_{2})|_{\mathbb{T}_{r}})=0$ for all $r$ close enough to
1. That is, $\det(G_{1}^{\ast}G_{2})\in{\text{inv }} C_{b}(\mathbb{A}_{\rho})$
and $w(\det(G_{1}^{\ast}G_{2})|_{\mathbb{T}_{r}})=0,\ r\in(\rho,1)$, for all
$\rho$ close enough to 1. So by Theorem~\ref{tumm}, $d_{v}$ and $\widetilde{d}
_{v}$ coincide in this case.

Next, let us assume that $\widetilde{d}_{v}(P_{1},P_{2})<1$. Then
$\widetilde{d}_{v}^{\rho}(P_{1},P_{2})<1$ for all $\rho$ sufficiently close to
$1$, which means that $\det(G_{1}^{\ast}G_{2})\in{\text{inv }} C_{b}
(\mathbb{A}_{\rho})$ and that $w(\det(G_{1}^{\ast}G_{2})|_{\mathbb{T}_{r}})=0$
for $r\in(\rho,1)$. Therefore, there exists $(F_{r})\in$ $\mathcal{A}$ such
that $\det(G_{1}^{\ast}G_{2})|_{\mathbb{A}_{r}}\cdot F_{r}=1$ pointwise for
$r\in(\rho,1)$. Since $\pi_{r}$ is a $\ast$-homomorphism, this implies that
$\det(G_{1}^{\ast}G_{2})$ is invertible as an element of
$\underrightarrow{\lim}\ C_{b}(\mathbb{A}_{r})$. By definition,
\[
W(\det(G_{1}^{\ast}G_{2}))=\lim_{r\rightarrow1}w(\det(G_{1}^{\ast}
G_{2})|_{\mathbb{T}_{r}}),
\]
and so the assumption that $w(\det(G_{1}^{\ast}G_{2})|_{\mathbb{T}_{r}})=0$,
$r\in(\rho,1),$ for all $\rho$ sufficiently close to $1$ implies that
$W(\det(G_{1}^{\ast}G_{2}))=0$. That is, in view of Theorem~\ref{tumm},
$d_{v}$ and $\widetilde{d}_{v}$ coincides also in this case, which completes
the proof.
\end{proof}

\section{A computational example}

As an illustration of the computability of the proposed $\nu$-metric, we give
an example where we calculate explicitly the $\nu$-metric when there is
uncertainty in the location of the zero of the (nonrational) transfer function.

In \cite{Sas12}, it was shown that
\begin{align*}
d_{\nu}\left(  e^{-sT}\frac{s}{s-a_{1}},e^{-sT}\frac{s}{s-a_{2}}\right)    &
=\frac{|a_{1}-a_{2}|}{\sqrt{2}(a_{1}+a_{2})}\text{ when }|a_{1}-a_{2}|\text{
is small enough, while }\\
d_{\nu}\left(  e^{-sT_{1}}\frac{s}{s-a},e^{-sT_{2}}\frac{s}{s-a}\right)    &
=1\text{ whenever }T_{1}\neq T_{2}.
\end{align*}
Continuing this theme, we will now calculate
\[
d_{\nu}\left(  e^{-sT}\frac{s-a_{1}}{s-b},e^{-sT}\frac{s-a_{2}}{s-b}\right)  ,
\]
hence quantifying the effect of uncertainty in the \emph{zero} location, and
complementing the previous two computations done in \cite{Sas12}, where the
effects of uncertainty in the \emph{pole} location, and uncertainty in the
\emph{delay} were described.

Consider the transfer function $P$ given by
\begin{equation}
P(s):=e^{-sT}\frac{s-a}{s-b},\label{plant}%
\end{equation}
where $T,b>0$, $a\in\mathbb{R}$, and $a\neq b$. Then $P\in\mathbb{F}%
(H^{\infty}(\mathbb{C} _{\scriptscriptstyle >0})),$ where $H^{\infty
}(\mathbb{C} _{\scriptscriptstyle >0})$ denotes the set of bounded and
holomorphic functions defined in the open right half plane
\[
\mathbb{C}_{\scriptscriptstyle >0}:=\{s\in\mathbb{C}:\text{Re}(s)>0\}.
\]
Using the conformal map $\varphi:\mathbb{D}\rightarrow\mathbb{C}
_{\scriptscriptstyle >0}$,
\[
\varphi(z)=\frac{1+z}{1-z},
\]
we can transplant the plant to $\mathbb{D}$. In this manner, we can also talk
about a $\nu$-metric on $\mathbb{S}(H^{\infty}(\mathbb{C}
_{\scriptscriptstyle >0}), p,m)$.

We will calculate the distance between a pair of plants arising from
\eqref{plant}, when there is uncertainty in the parameter $a$, the zero of the
transfer function. A normalized (left and right) coprime factorization of $P$
is given by $P=N/D$, where
\[
N(s)=\frac{(s-b)e^{-sT}}{\sqrt{2}s+\sqrt{a^{2}+b^{2}}},\ \ \ D(s)=\frac
{s-a}{\sqrt{2}s+\sqrt{a^{2}+b^{2}}}.
\]
This factorization was found using the algorithm given in \cite[Example~4.1]%
{PS}. Set $s:=\varphi(z)$ for $z\in\mathbb{D}$, and consider the two plants
\begin{equation}
P_{1}:=e^{-sT}\frac{s-a_{1}}{s-b}\quad\text{and}\quad P_{2}:=e^{-sT}
\frac{s-a_{2}}{s-b},\label{plantor}%
\end{equation}
where $T,b>0$ and $a_{1},a_{2}\in\mathbb{R}\setminus\{b\}$. Define $f$ by
\begin{equation}
f(s) :=G_{1}^{\ast}G_{2}=\overline{N}_{1}N_{2}+\overline{D}_{1} D_{2}
=\frac{(\overline{s}-b)(s-b)e^{-2\operatorname{Re}(s)T}+(\overline{s}
-a_{1})(s-a_{2})}{( \sqrt{2}\overline{s}+\sqrt{a_{1}^{2}+b^{2}}) ( \sqrt
{2}s+\sqrt{a_{2}^{2}+b^{2}}) }.\label{fs}%
\end{equation}
Note that the map $z\mapsto|f(\varphi(z))|$ is bounded on $\mathbb{D}$. We
shall show that the real part of this map is nonnegative and bounded away from
zero for all $s\in\mathbb{C}$ such that $\operatorname{Re}(s)>0,$ provided
that $|a_{1}-a_{2}|$ is small enough. The proof of this fact is analogous to
the proof of \cite[Lemma~4.1]{Sas12}.

\begin{lemma}
Let $T,b>0$ and $a_{1},a_{2}\in\mathbb{R} \setminus\{b\}$. Set $\mathbb{C}%
_{\scriptscriptstyle >0}=\{s\in\mathbb{C}:\operatorname{Re}(s)>0\}$. Let
$f(s)$ be defined as in \eqref{fs}. Then there exist $\delta_{0}$ and $m>0$
such that for all $\delta\in\lbrack0,\delta_{0}),\ s\in\mathbb{C}%
_{\scriptscriptstyle>0}$, there holds that: if $|a_{1}-a_{2}|<\delta$, then
$\operatorname{Re}(f(s))>m>0$.
\end{lemma}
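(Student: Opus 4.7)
The plan is a compactness-plus-perturbation argument. First handle the reference case $a_1 = a_2$, where $f$ is real-valued and one can bound it below directly on all of $\overline{\mathbb{C}_{\scriptscriptstyle >0}}$; then derive a uniform estimate $|\operatorname{Re}(f)(s) - f_0(s)| \leq C|a_1 - a_2|$ for $s \in \overline{\mathbb{C}_{\scriptscriptstyle >0}}$, so that the strict positivity passes to $\operatorname{Re}(f)$ for small $|a_1-a_2|$.

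Writing $s=x+iy$, $\eta:=a_2-a_1$, $\bar a:=(a_1+a_2)/2$, and $\alpha_i:=\sqrt{a_i^2+b^2}$, a direct expansion of \eqref{fs} yields
\begin{align*}
\operatorname{Re}N&=|s-b|^2 e^{-2xT}+|s-\bar a|^2-\tfrac{\eta^2}{4},&\operatorname{Im}N&=\eta y,\\
\operatorname{Re}D&=2|s|^2+\sqrt{2}(\alpha_1+\alpha_2)x+\alpha_1\alpha_2,&\operatorname{Im}D&=\sqrt{2}(\alpha_1-\alpha_2)y.
\end{align*}
When $\eta=0$, both $N$ and $D$ are real and $f$ reduces to $f_0(s):=(|s-b|^2 e^{-2xT}+|s-a_1|^2)/|\sqrt{2}s+\alpha_1|^2$. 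The numerator is a sum of two nonnegative terms and vanishes only at a simultaneous solution of $s=b$ and $s=a_1$, which is precluded by $a_1 \neq b$; the denominator's only zero is $-\alpha_1/\sqrt{2} \notin \overline{\mathbb{C}_{\scriptscriptstyle >0}}$. So $f_0>0$ on $\overline{\mathbb{C}_{\scriptscriptstyle >0}}$. As $|s|\to\infty$, the ratio $|s-a_1|^2/|\sqrt{2}s+\alpha_1|^2 \to 1/2$ uniformly in direction (both have the same quadratic leading behaviour), so $f_0 \geq 1/4$ outside some disk; on the complementary compact set, continuity and strict positivity force $f_0 \geq m_K>0$. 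Combining, $f_0(s) \geq 2m$ on $\overline{\mathbb{C}_{\scriptscriptstyle >0}}$ for some $m > 0$.

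For the perturbation, use $|\alpha_1-\alpha_2|=|a_1+a_2|\,|\eta|/(\alpha_1+\alpha_2)\leq |\eta|$ to verify that $\operatorname{Re}N-(|s-b|^2 e^{-2xT}+|s-a_1|^2)=\eta(a_1-x)$ and $\operatorname{Re}D-|\sqrt{2}s+\alpha_1|^2=(\alpha_2-\alpha_1)(\sqrt{2}x+\alpha_1)$ are each of order $|\eta|(1+|s|)$, and $|\operatorname{Im}N\cdot\operatorname{Im}D|\leq\sqrt{2}|\eta|^2|s|^2$, with constants depending only on $a_1$ and $b$. Meanwhile $|D(s)|^2\geq(\operatorname{Re}D)^2\geq(2|s|^2+\alpha_1\alpha_2)^2\geq c(1+|s|^2)^2$ uniformly for $|\eta|\leq 1$. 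Substituting into
\[
\operatorname{Re}(f)-f_0=\frac{\operatorname{Re}N\cdot\operatorname{Re}D+\operatorname{Im}N\cdot\operatorname{Im}D}{|D|^2}-\frac{g_0}{h_0}
\]
(with $g_0,h_0$ the $\eta=0$ counterparts of the numerator and denominator) and expanding term by term, remainders of order $|\eta|(1+|s|)$ multiply against $O(1+|s|^2)$ factors to give $O(|\eta|(1+|s|^3))$ in the numerator; division by $|D|^2 \sim (1+|s|^2)^2$ collapses this to $O(|\eta|)$ uniformly in $s$. Concretely, $|\operatorname{Re}(f)(s)-f_0(s)|\leq C'|\eta|$ on $\overline{\mathbb{C}_{\scriptscriptstyle >0}}$ for some $C'=C'(a_1,b,T)$. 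Setting $\delta_0:=\min(1,m/C')$, for $|\eta|<\delta_0$ one has $\operatorname{Re}(f)(s)\geq 2m-C'|\eta|>m$ on $\mathbb{C}_{\scriptscriptstyle >0}$, which is the claim.

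The principal obstacle is the uniform perturbation estimate in the last step: the naive $s$-dependent bounds on each remainder grow polynomially, and it is only through the quadratic growth of $|D|$, together with a uniform (in $\eta$, for $\eta$ small) lower bound on $|D|^2$, that the difference $\operatorname{Re}(f)-f_0$ becomes $O(|\eta|)$ independently of $s$. Careful power-counting in $|s|$ for each term is therefore essential to close the argument.
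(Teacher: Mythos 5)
Your proposal is correct, and the algebra checks out: the identities $\operatorname{Re}N-(|s-b|^2e^{-2xT}+|s-a_1|^2)=\eta(a_1-x)$, $\operatorname{Re}D-|\sqrt{2}s+\alpha_1|^2=(\alpha_2-\alpha_1)(\sqrt{2}x+\alpha_1)$ and the bound $|\alpha_1-\alpha_2|\le|\eta|$ are all right, and the power counting does close (the numerator of the difference of the two fractions is $O(|\eta|(1+|s|)^{7})$ against a denominator $\gtrsim(1+|s|)^{8}$, giving the uniform $O(|\eta|)$ bound). The overall strategy --- a positive reference case plus a perturbation, with a split between a neighbourhood of infinity and a compact set --- is the same as the paper's, but the mechanics of the perturbation step differ genuinely. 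The paper never forms a global estimate $|\operatorname{Re}(f)-f_0|\le C'|\eta|$: for $|s|>R$ it shows \emph{directly} that both factors $\frac{s-a_1}{\sqrt{2}s+\alpha_1}$ and $\frac{s-a_2}{\sqrt{2}s+\alpha_2}$ lie within $\varepsilon$ of $1/\sqrt{2}$ and then uses the identity $xy-\frac12=(x-\frac{1}{\sqrt 2})(y-\frac{1}{\sqrt 2})+\frac{1}{\sqrt 2}(x-\frac{1}{\sqrt 2}+y-\frac{1}{\sqrt 2})$ to get $\operatorname{Re}(f)>\frac14$ there for both the perturbed and unperturbed plants at once; for $|s|\le R$ it avoids all explicit estimates by invoking uniform continuity of $F(s,\delta)$ on the compact product $K\times[0,1]$. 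Your route buys an explicit, quantitative Lipschitz-in-$\eta$ control valid on all of $\overline{\mathbb{C}_{>0}}$ (so the compact-set step reduces to bounding $f_0$ alone, and one could in principle extract an explicit $\delta_0$), at the price of the term-by-term power counting, which you sketch rather than fully execute but which is routine to complete; the paper's route is softer near the origin and cleaner near infinity, but gives no effective constants. One cosmetic point: your constant $C'$ need not depend on $T$, since the exponential factor satisfies $e^{-2xT}\le 1$ on $\overline{\mathbb{C}_{>0}}$ and cancels in $\operatorname{Re}N-g_0$.
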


\begin{proof}
Without loss of generality, we may assume that $a_{1}<a_{2}$. Let $a$ and
$\delta$ be such that $a=a_{1}$ and $a+\delta=a_{2}$ respectively, and choose
$\varepsilon>0$ such that $\frac{\varepsilon^{2}}{2}+\frac{3\varepsilon
}{2\sqrt{2}}<\frac{1}{4}$. Note that
\[
\underset{s\in\mathbb{C}_{\scriptscriptstyle >0}}{\lim_{|s|\rightarrow\infty}
}\frac{s-a}{\sqrt{2}s+\sqrt{a^{2}+b^{2}}}=\frac{1}{\sqrt{2}}.
\]
Therefore we can chose $R>0$ such that
\begin{equation}
\left\vert \frac{s-a}{\sqrt{2}s+\sqrt{a^{2}+b^{2}}}-\frac{1}{\sqrt{2}
}\right\vert <\frac{\varepsilon}{2},\label{e1}%
\end{equation}
for all $|s|>R$. We have
\begin{align*}
I & :=  \left|  \frac{s-a}{\sqrt{2}s+\sqrt{a^{2}+b^{2}}}-\frac{s-(a+\delta
)}{\sqrt{2}s+\sqrt{(a+\delta)^{2}+b^{2}}}\right| \\
&  =\left|  \frac{\delta(\sqrt{2}s+\sqrt{a^{2}+b^{2}})+(s-a)(\sqrt
{(a+\delta)^{2}+b^{2}}-\sqrt{a^{2}+b^{2}})}{(\sqrt{2}s+\sqrt{a^{2}+b^{2}
})(\sqrt{2}s+\sqrt{(a+\delta)^{2}+b^{2}})}\right|
\end{align*}
Let us assume that $\delta<|a|$. Then we have
\begin{align*}
I  &  \leq \delta\frac{| \sqrt{2}s+\sqrt{a^{2}+b^{2}}| +3|s-a|}{| \sqrt
{2}s+\sqrt{a^{2}+b^{2}}| \;| \sqrt{2}s+\sqrt{(a+\delta)^{2}+b^{2}}| }\\
&  \leq\delta\Bigg( \frac{1}{|\sqrt{2}s+\sqrt{(a+\delta)^{2}+b^{2} }|
}+\left|  \frac{s-a}{\sqrt{2}s+\sqrt{a^{2}+b^{2}}}\right|  \cdot\frac{3}{|
\sqrt{2}s+\sqrt{(a+\delta)^{2}+b^{2}}| }\Bigg)\\
&  \leq\delta\left(  \frac{1}{\sqrt{2}R}+\left(  \frac{1}{\sqrt{2}}
+\frac{\varepsilon}{2}\right)  \frac{3}{\sqrt{2}R}\right)  ,
\end{align*}
since $\operatorname{Re}(s)>0.$ Therefore, if we choose $\delta_{0}$ so that
for all $0\leq\delta<\delta_{0}$,
\[
\delta<\frac{\sqrt{2}R}{1+\frac{3}{\sqrt{2}}+\frac{3\varepsilon}{2}}\cdot
\frac{\varepsilon}{2},
\]
then for all such $\delta$ and for $|s|>R,$ we have that
\begin{equation}
\left\vert \frac{s-(a+\delta)}{\sqrt{2}s+\sqrt{(a+\delta)^{2}+b^{2}}}-\frac
{1}{\sqrt{2}}\right\vert <\varepsilon.\label{e2}%
\end{equation}
As a consequence of (\ref{e1}) and (\ref{e2}), and using
\[
xy-\frac{1} {2}=\left( x-\frac{1}{\sqrt{2}}\right) \left( y-\frac{1}{\sqrt{2}%
}\right) +\frac{1}{\sqrt{2}}\left( x-\frac{1}{\sqrt{2}}+y-\frac{1}{\sqrt{2}%
}\right) ,
\]
we obtain
\[
\left\vert \frac{s-a}{\sqrt{2}s+\sqrt{a^{2}+b^{2}}}\cdot\frac{s-(a+\delta
)}{\sqrt{2}s+\sqrt{(a+\delta)^{2}+b^{2}}}-\frac{1}{2}\right\vert \leq
\frac{\varepsilon^{2}}{2}+\frac{3\varepsilon}{2\sqrt{2}}.
\]
Therefore,
\[
\frac{1}{2}-\operatorname{Re}\left(  \frac{\overline{s}-a}{\sqrt{2}
\overline{s}+\sqrt{a^{2}+b^{2}}}\cdot\frac{s-(a+\delta)}{\sqrt{2}
s+\sqrt{(a+\delta)^{2}+b^{2}}}\right)
<\frac{1}{4},
\]
and
\[
\operatorname{Re}\left(  \frac{\overline{s}-a}{\sqrt{2}\overline{s}
+\sqrt{a^{2}+b^{2}}}\cdot\frac{s-(a+\delta)}{\sqrt{2}s+\sqrt{(a+\delta
)^{2}+b^{2}}}\right)  >\frac{1}{4}.
\]
Also, for $s\in\mathbb{C}_{\scriptscriptstyle >0},$
\[
\operatorname{Re}\Bigg(  \frac{|s-b|^{2}e^{-2\operatorname{Re}(s)T}}{(
\sqrt{2}\overline{s}+\sqrt{a^{2}+b^{2}}) ( \sqrt{2} s+\sqrt{(a+\delta
)^{2}+b^{2}}) }\Bigg)  >0.
\]
Combined, this means that, for $|s|>R$ and $0\leq\delta<\delta_{0},$
$\operatorname{Re}(f(s))>\frac{1}{4}$.

To treat the case when $|s|<R$, set $K=\{s\in\mathbb{C}%
_{\scriptscriptstyle \geq0}:|s|\leq R\}.$ Define $F:K\times\lbrack
0,1]\rightarrow\mathbb{R}$ by
\[
F(s,\delta)=\operatorname{Re}\left(  \frac{(\overline{s}
-b)(s-b)e^{-2\operatorname{Re}(s)T}+(\overline{s}-a)(s-(a+\delta))}{( \sqrt
{2}\overline{s}+\sqrt{a^{2}+b^{2}}) ( \sqrt{2} s+\sqrt{(a+\delta)^{2}+b^{2}})
}\right)  .
\]
Then
\[
F(s,0)=\operatorname{Re}\left(  \frac{|s-b|e^{-2\operatorname{Re}
(s)T}+|s-a|^{2}}{|\sqrt{2}s+\sqrt{a^{2}+b^{2}}|}\right)  \geq0
\]
and let $2m:=\min_{s\in K}F(s,0).$ Clearly $m\geq0$, and in fact, since $a\neq
b,$ $|s-a|^{2}$ and $|s-b|^{2}$ cannot be zero simultaneously so $m>0$. Now,
since $F$ is continuous on the compact set $K\times\lbrack0,1]$, $F$ is
uniformly continuous there. This means that we may, if necessary, redefine our
choice of $\delta_{0}$ so that if $0\leq\delta<\delta_{0}$, then
$|F(s,\delta)-F(s,0)|<m$ for all $s\in K$. That is, $F(s,\delta
)=\operatorname{Re} (f(s))>m$ for all $\delta\in\lbrack0,\delta_{0}),\ s\in
K$. Combining these observations, we see that $\operatorname{Re}%
(f(s))>\min\{m,1/4\}.$ This completes the proof.
\end{proof}

As a consequence of this result, $G_{1}^{\ast}G_{2}$ is invertible (as an
element of $\underrightarrow{\lim}\;C_{b}(\mathbb{A}_{r})$) and its index is
$W(G_{1}^{\ast}\allowbreak G_{2})=0$. Thus the distance between the two plants
$P_{1},\ P_{2}$ in (\ref{plantor}) is given by
\begin{align*}
\Vert\widetilde{G}_{2}G_{1}\Vert_{\infty} &  =\sup_{s=i\omega,\;\omega
\in\mathbb{R}}\left\vert \frac{(a_{1}-a_{2})(s-b)e^{-sT}}{(\sqrt{2}%
s+\sqrt{a_{1}^{2}+b^{2}})(\sqrt{2}s+\sqrt{a_{2}^{2}+b^{2}})}\right\vert \\
&  =\frac{|a_{1}-a_{2}|}{2}\sup_{\omega\in\mathbb{R}}\frac{\sqrt{\omega
^{2}+b^{2}}}{\sqrt{\omega^{2}+\frac{a_{1}^{2}+b^{2}}{2}}\sqrt{\omega^{2}%
+\frac{a_{2}^{2}+b^{2}}{2}}}.
\end{align*}
We will now determine the supremum in the last expression in the following two
mutually exclusive cases:

\medskip

\noindent$\underline{1}^{\circ}$ Suppose that $(a_{1}^{2}-b^{2})(a_{2}
^{2}-b^{2})\geq4b^{4}$. Then we have
\begin{align*}
\sup_{\omega\in\mathbb{R}}\frac{\sqrt{\omega^{2}+b^{2}}}{\sqrt{\omega
^{2}+\frac{a_{1}^{2}+b^{2}}{2}}\sqrt{\omega^{2}+\frac{a_{2}^{2}+b^{2}}{2}}}
&  =\sup_{\omega\in\mathbb{R}}\frac{1}{\sqrt{(\omega^{2}+b^{2})+\frac{a_{1}
^{2}+a_{2}^{2}-2b^{2}}{2}+\frac{(a_{1}^{2}-b^{2})(a_{2}^{2}-b^{2})}
{4(\omega^{2}+b^{2})}}}\\
&  =\frac{1}{\sqrt{\displaystyle\inf_{\omega\in\mathbb{R}}\left(  \omega
^{2}+b^{2}+\frac{(a_{1}^{2}-b^{2})(a_{2}^{2}-b^{2})}{4(\omega^{2}+b^{2}
)}\right)  +\frac{a_{1}^{2}+a_{2}^{2}-2b^{2}}{2}}}.
\end{align*}
By the arithmetic mean-geometric mean inequality,
\[
\omega^{2}+b^{2}+\frac{(a_{1}^{2}-b^{2})(a_{2}^{2}-b^{2})}{4(\omega^{2}
+b^{2})}\geq\sqrt{(a_{1}^{2}-b^{2})(a_{2}^{2}-b^{2})},
\]
with equality if and only if
\[
\omega^{2}=\sqrt{\frac{(a_{1}^{2}-b^{2})(a_{2}^{2}-b^{2})}{4}}-b^{2}\geq0,
\]
thanks to our assumption that $(a_{1}^{2}-b^{2})(a_{2}^{2}-b^{2})\geq4b^{4}$.
Thus
\[
\sup_{\omega\in\mathbb{R}}\frac{\sqrt{\omega^{2}+b^{2}}}{\sqrt{\omega
^{2}+\frac{a_{1}^{2}+b^{2}}{2}}\sqrt{\omega^{2}+\frac{a_{2}^{2}+b^{2}}{2}}
}=\frac{\sqrt{2}}{\sqrt{a_{1}^{2}-b^{2}}+\sqrt{a_{2}^{2}-b^{2}}}.
\]
Note that in the above, we have used the fact that $a_{1}^{2}>b^{2}$ and
$a_{2}^{2}>b^{2}$ which follows from the condition $(a_{1}^{2}-b^{2}
)(a_{2}^{2}-b^{2})\geq4b^{4}$ ($\geq0$): indeed, if we have (the only other
case) $b^{2}\geq a_{1}^{2}$ and $b^{2}\geq a_{2}^{2}$, then we arrive at the
contradiction that $b^{4}=b^{2}\cdot b^{2}>(b^{2}-a_{1}^{2})(b^{2}-a_{2}
^{2})\geq4b^{4}$.

\medskip

\noindent$\underline{2}^{\circ}$ Now let us consider the other possibility,
namely that $(a_{1}^{2}-b^{2})(a_{2}^{2}-b^{2})< 4b^{4}$. Then for $\omega
\in\mathbb{R}$, we have $4b^{2} (\omega^{2}+b^{2}) \geq4b^{4} > (a_{1}
^{2}-b^{2}) (a_{2}^{2}-b^{2})$ and so
\[
1> \frac{(a_{1}^{2}-b^{2}) (a_{2}^{2}-b^{2})}{4 b^{2} (\omega^{2}+b^{2})}.
\]
From this we obtain upon multiplying both sides by $\omega^{2}$ ($\geq0$)
that
\[
\omega^{2} \geq\frac{(a_{1}^{2}-b^{2}) (a_{2}^{2}-b^{2})\omega^{2}}{4 b^{2}
(\omega^{2}+b^{2})}= \frac{(a_{1}^{2}-b^{2}) (a_{2}^{2}-b^{2})}{4} \left(
\frac{1}{b^{2}}-\frac{1}{\omega^{2}+b^{2}}\right)  .
\]
By rearranging and adding $b^{2}$ on both sides, we have
\[
\omega^{2} +b^{2}+ \frac{(a_{1}^{2}-b^{2}) (a_{2}^{2}-b^{2})}{4 (\omega^{2}
+b^{2})} \geq b^{2}+ \frac{(a_{1}^{2}-b^{2}) (a_{2}^{2}-b^{2})}{4b^{2}},
\]
and so
\begin{align*}
\frac{1}{\sqrt{(\omega^{2}+ b^{2})+ \frac{a_{1}^{2}+a_{2}^{2}-2b^{2}}{2} +
\frac{(a_{1}^{2}-b^{2})(a_{2}^{2}-b^{2})}{4(\omega^{2}+ b^{2}) } }}  &
\leq\frac{1}{\sqrt{b^{2}+ \frac{(a_{1}^{2}-b^{2})(a_{2}^{2}-b^{2})}{4b^{2}}
+\frac{a_{1}^{2}+a_{2}^{2}-2b^{2}}{2} }}\\
&  = \frac{2b}{\sqrt{a_{1}^{2} +b^{2}}\sqrt{a_{2}^{2}+b^{2}}}%
\end{align*}
and there is equality if $\omega=0$. Consequently,
\[
\sup_{\omega\in\mathbb{R}}\frac{\sqrt{\omega^{2}+b^{2}}}{\sqrt{\omega
^{2}+\frac{a_{1}^{2}+b^{2}}{2}}\sqrt{\omega^{2} +\frac{a_{2}^{2}+b^{2}}{2}}} =
\frac{2b}{\sqrt{a_{1}^{2} +b^{2}}\sqrt{a_{2}^{2}+b^{2}}}.
\]

\medskip

Summarizing the two cases, we have
\[
d_{\nu}\left(  e^{-sT} \frac{s-a_{1}}{s-b}, e^{-sT} \frac{s-a_{2}}%
{s-b}\right)  = \left\{
\begin{array}
[c]{ll}%
\displaystyle \frac{|a_{1}-a_{2}|}{\sqrt{2}(\sqrt{a_{1}^{2}-b^{2}}+\sqrt
{a_{2}^{2}-b^{2}})} & \text{if } (a_{1}^{2}-b^{2})(a_{2}^{2}-b^{2})\geq
4b^{4}\\
\displaystyle \frac{b|a_{1}-a_{2}|}{\sqrt{a_{1}^{2} +b^{2}}\sqrt{a_{2}
^{2}+b^{2}}} & \text{if } (a_{1}^{2}-b^{2})(a_{2}^{2}-b^{2})< 4b^{4}%
\end{array}
\right.
\]
when $|a_{1}-a_{2}|$ is small enough.

\section{Our choice of the Banach algebra $S= \displaystyle \lim
_{\longrightarrow}C_{b}(\mathbb{A}_{r})$, and its relation to others}

In this section we give the rationale behind our choice of 
$S$ as the Banach algebra $\underrightarrow{\lim}\;C_{b}(\mathbb{A}_{r})$, 
by first pointing out that some natural guesses for $S$ fail, and this 
is explained in Subsection~\ref{subsec_nat_guesses}. 

Next, in Subsection~\ref{subsec_alternate_description}, we provide another representation  
of  $\underrightarrow{\lim}\;C_{b}(\mathbb{A}_{r})$ using the Stone-\v{C}ech compactification 
of $\mA_0$. 
We also show the relation of our algebra with $L^\infty(\mT)$.  

\subsection{Some natural guesses}
\label{subsec_nat_guesses}

Recall that if $S$ is a unital, commutative Banach algebra, 
then the {\em abstract index}  ${\tt I}_S: \inv S \rightarrow (\inv S)/(\exp S)$ 
is the canonical homomorphism given by ${\tt I}_S(x)=[x]$ for $x\in \inv S$, 
and where $[x]$ denotes the coset of $x$, namely the set $x\exp S $. 

A first guess is to use $L^\infty(\mT)$ as $S$. But then an immediate 
question is what the index function could be. One might guess that the abstract index 
for $L^\infty(\mT)$ works, but as we show below, this is not true. 

\begin{itemize}
 \item[(G1)] $S:=L^\infty(\mT)$ with $\iota:= {\tt I}_{L^\infty(\mT)}$ fails: If we choose $S=L^\infty(\mT)$ and take $\iota$ to be the abstract index function ${\tt I}_{L^\infty(\mT)}$, then the 
``if'' part of (A4) fails, as seen by the following example. 
Consider the element $x:=z\in H^\infty \bigcap \inv L^\infty(\mT)$, and let $g\in L^\infty(\mT)$ be given by 
$g(e^{it})=t$ for $t\in  (-\pi, \pi]$. Then for $t \in (-\pi,\pi]$, $(e^{ig})(e^{it})=e^{i g(e^{it})}= e^{it}$, 
and so $e^{ig}=x$. Thus the abstract index of $x$ is the zero element of the group 
$ (\inv L^\infty(\mT))/(\exp L^\infty(\mT))$, 
but clearly $x=z$ is not invertible as an element of $H^\infty$. 
\end{itemize}

\medskip 

\noindent Another guess might be to  take $S$ to be  $C(\beta \mA_0  \setminus \mA_0)$. Here, $\beta \mA_0$ denotes the 
Stone-\v{C}ech compactification of $\mA_0$, which is the maximal ideal space of the Banach algebra $C_b(\mA_0)$ 
of all complex-valued bounded continuous functions on $\mA_0$. Since $\mA_0$ is 
open in $ \beta \mA_0$, we note that $ \beta \mA_0 \setminus \mA_0$ is compact.  
We show below that also this guess of using 
  $C(\beta \mA_0 \setminus \mA_0)$ fails  if we use the abstract index. 

\begin{itemize}
 \item[(G2)] $S:= C(\beta \mA_0 \setminus \mA_0)$ with 
$\iota:={\tt I}_{ C(\beta \mA_0 \setminus \mA_0)}$ fails: We take $S$ to be the $C^*$-algebra of continuous functions $C(Y)$ on  
$Y:=\beta \mA_0 \setminus \mA_0$, again with the abstract 
index defined on $C(Y)$ taken as a candidate 
choice for the index $\iota$. Then one can see that  the ``only if'' part in (A4) fails by means of 
the following example. 

Let $c$ be any conformal 
mapping from the unit disk $\mD:=\{z\in \mC: |z|<1\}$ onto the 
strip $\{z\in \mC: -1<\textrm{Re}(z)<0\}$. Set $x:=e^c$. Then $x$ is an element 
of $C_b(\mA_0)$ because $|e^c|=e^{\textrm{Re}(c(z))}<e^0=1$ for $z\in 
\mA_0$. Also, $x$ is invertible as an element of $C_b(\mA_0)$, because its inverse is $e^{-c}$, which belongs 
to $C_b(\mA_0)$: $|e^{-c}|=e^{-\textrm{Re}(c(z))}<e^1=e$ for $z\in 
\mA_0$. But we will now show that the abstract index of $x$ is not 
the zero element of $(\inv C(Y)) / (\exp C(Y))$, by showing that it does not admit a continuous logarithm $G$ on 
$Y$. Suppose, on the contrary, that it does admit 
a continuous logarithm $G$ on $Y$: $\widehat{x}= e^{G}$. Then it also admits a continuous logarithm on a 
neighbourhood of $Y$ in $\beta \mA_0$.  This implies that 
$e^{c(z)}=e^{G(z)}$ for all $z\in \mA_r$, for a large enough $r$, and so $c(z)-G(z)=2\pi i k(z)$ for some integer-valued 
function $k$ on $\mA_r$. But as $c,G$ are continuous, the above shows that $k$ should be continuous too, 
and being integer-valued, it must reduce to a constant. But this is impossible, as $G$ is bounded while 
the imaginary part of $c$ can be made as large as we please on $\mA_r$.
\end{itemize}

\subsection{The isometric isomorphism $ \displaystyle \lim
_{\longrightarrow}C_{b}(\mathbb{A}_{r}) \simeq C(\beta \mA_0 \setminus \mA_0)$} 
\label{subsec_alternate_description}

$\;$

\noindent We now demostrate that in our above guess (G2), what was wrong was 
the choice of the index function. Indeed, we  show below that 
 $
\underrightarrow{\lim}\;C_{b}(\mathbb{A}_{r}) \simeq C(\beta \mA_0 \setminus \mA_0)$, 
that is, the two are isometrically isomorphic. (Thus rather than using the abstract index for $C(\beta \mA_0 \setminus \mA_0)$, which fails as shown in (G2) above, 
one should use the index $W$ defined via winding numbers, which works.)  This gives another description of our Banach algebra, 
$\underrightarrow{\lim}\;C_{b}(\mathbb{A}_{r}) $, albeit it is not as concrete as our earlier description, since it relies on the Stone-\v{C}ech compactification of $\mA_0$. 
For the purposes of doing computation, for instance the calculation of $W$, it is more explicit to work with 
$\underrightarrow{\lim}\;C_{b}(\mathbb{A}_{r})$, where one just has concrete sequences of functions on shrinking annuli.

\begin{theorem}
 $\underrightarrow{\lim}\;C_{b}(\mathbb{A}_{r}) $ is isometrically isomorphic to 
 $ C(\beta\mA_0 \setminus \mA_0)$. 
\end{theorem}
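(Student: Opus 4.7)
The plan is to realize $\underrightarrow{\lim}\;C_{b}(\mA_{r})$ as a quotient $C_b(\mA_0)/I$ for a suitable closed ideal $I$, and then to apply the Gelfand--Naimark theorem together with the standard Stone-\v{C}ech identification $C(\beta X \setminus X) \simeq C_b(X)/C_0(X)$ for $X$ locally compact Hausdorff. Since both sides of the desired isomorphism are commutative unital $C^*$-algebras, it is enough to produce a $*$-isomorphism---the isometric property is then automatic.

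Concretely, I would work with the canonical $*$-homomorphism $\pi_0 \colon C_b(\mA_0) \to \underrightarrow{\lim}\;C_{b}(\mA_{r})$ already introduced in Section~3, and check two things. First, $\pi_0$ is surjective: any class in the limit has a representative $f \in C_b(\mA_{r_0})$, and multiplying $f$ by a continuous radial cutoff $\chi(|z|)$ equal to $1$ on $\mA_{r_1}$ (for some $r_0 < r_1 < 1$) and vanishing on $\{0 < |z| \leq r_0\}$ produces $\widetilde f \in C_b(\mA_0)$ with $\pi_0(\widetilde f)$ equal to the given class, since $\widetilde f$ and $f$ agree on $\mA_{r_1}$ and the representatives of a class may be altered outside arbitrarily small neighbourhoods of $\mT$. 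Second, by the definition of the seminorm on the inductive limit,
\begin{equation*}
\ker \pi_0 \;=\; \{f \in C_b(\mA_0) \colon \lim_{r \nearrow 1} \sup_{z \in \mA_r}|f(z)| = 0\}.
\end{equation*}
Identifying $C_b(\mA_0) \simeq C(\beta \mA_0)$ via the Gelfand transform, I would match this kernel with the ideal $C_0(\mA_0) \leftrightarrow \{F \in C(\beta \mA_0) \colon F|_{\beta \mA_0 \setminus \mA_0} = 0\}$; since $\beta \mA_0 \setminus \mA_0$ is closed in $\beta\mA_0$, the Tietze extension theorem makes the restriction $C(\beta\mA_0) \to C(\beta\mA_0 \setminus \mA_0)$ surjective with exactly this kernel, so the First Isomorphism Theorem yields
\begin{equation*}
\underrightarrow{\lim}\;C_{b}(\mA_{r}) \;\simeq\; C_b(\mA_0)/\ker\pi_0 \;\simeq\; C(\beta\mA_0)/\{F|_{\beta\mA_0 \setminus \mA_0}=0\} \;\simeq\; C(\beta\mA_0 \setminus \mA_0).
\end{equation*}

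The main technical obstacle is the precise identification $\ker \pi_0 \simeq C_0(\mA_0)$. By construction, $\ker \pi_0$ manifestly demands only the vanishing of $f$ as $|z| \to 1^-$, whereas $C_0(\mA_0)$ also requires vanishing as $|z| \to 0^+$, since both ends of the punctured disk $\mA_0$ are ``at infinity'' for its locally compact structure. Bridging these two notions---in other words, ensuring that the \v{C}ech fibre of $\beta\mA_0 \setminus \mA_0$ lying over $\{0\}$ does not contribute obstructions to the quotient---is where the bulk of the work sits. I expect the argument will proceed by exploiting the ultrafilter/$z$-filter description of Stone-\v{C}ech remainders: each maximal ideal of $\underrightarrow{\lim}\;C_b(\mA_r)$ should correspond uniquely to a $z$-ultrafilter on $\mA_0$ with no limit point in $\mA_0$, hence to a point of $\beta\mA_0 \setminus \mA_0$, and one checks that this correspondence is a homeomorphism of maximal ideal spaces which dualises to the claimed isometric $*$-isomorphism of $C^*$-algebras.
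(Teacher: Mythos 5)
Your overall strategy --- realize the inductive limit as $C_b(\mA_0)/\ker\pi_0$ and invoke the standard identification $C(\beta\mA_0\setminus\mA_0)\simeq C_b(\mA_0)/C_0(\mA_0)$ --- is a natural alternative to the paper's argument, which instead builds the map $f\mapsto\widetilde f$ from $C(\beta\mA_0\setminus\mA_0)$ into $\underrightarrow{\lim}\;C_b(\mA_r)$ directly from the restriction map and then checks surjectivity by a radial reparametrization. However, the step you yourself single out as the main obstacle, namely the identification $\ker\pi_0\simeq C_0(\mA_0)$, is not merely a technical point to be deferred: it is false, and no ultrafilter argument will repair it. The function $f(z)=1-|z|$ lies in $\ker\pi_0$, since $\sup_{z\in\mA_r}|f(z)|=1-r\to 0$, but it does not lie in $C_0(\mA_0)$, because the punctured disk $\mA_0$ has a second end at the puncture $z=0$, where $f\to 1$. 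Hence $C_0(\mA_0)\subsetneq\ker\pi_0$, and the middle link in your chain, $C_b(\mA_0)/\ker\pi_0\simeq C_b(\mA_0)/C_0(\mA_0)$, breaks. What your construction actually yields is $C(Y)$ with $Y$ the hull of $\ker\pi_0$, namely $Y=\bigcap_{0<r<1}\mathrm{cl}_{\beta\mA_0}(\mA_r)$, the part of the corona lying over $\mT$; this is a \emph{proper} closed subset of $\beta\mA_0\setminus\mA_0$, which in addition contains the (large) fibre over the puncture.

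You should also be aware that the same example exposes the identical gap in the paper's own proof: taking $F(z)=1-|z|$ and $G=0$, one has $r(F)\neq r(G)$ in $C(\beta\mA_0\setminus\mA_0)$ (they differ on the fibre over $0$) while $\pi_0(F)=\pi_0(G)=0$, so the paper's map $f\mapsto\widetilde f$ is not well defined as an injection and the claimed equality of norms fails. The statement becomes correct, and your quotient argument then goes through essentially verbatim, if one replaces $\mA_0$ by a base space whose only escape to infinity is towards $\mT$ --- most simply $\mD$ itself, for which $C_0(\mD)$ is exactly $\{F\in C_b(\mD):\lim_{r\to1}\sup_{\mA_r}|F|=0\}=\ker\pi_0$, giving $\underrightarrow{\lim}\;C_b(\mA_r)\simeq C_b(\mD)/C_0(\mD)\simeq C(\beta\mD\setminus\mD)$. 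So the discrepancy you flagged is real; the resolution is to change the compactified space, not to identify the two ideals.
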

\begin{proof}
 Consider the restriction map $r: C(\beta\mA_0) \rightarrow C(\beta\mA_0 \setminus \mA_0)$. 
This map is clearly onto. 

Let $f\in C(\beta\mA_0 \setminus \mA_0)$. Suppose that $F,G \in 
 C(\beta\mA_0)$  
 are such that $r(F)=f=r(G)$. Then since $F$ and $G$ coincide on the compact set 
$\beta\mA_0 \setminus \mA_0$, 
it follows that given any $\epsilon>0$, there is a $r<1$, large enough, 
such that $|F-G|<\epsilon$ on $\mA_r$. So $\pi_0(F)=\pi_0(G)$. Hence from $f$ 
we can obtain a well-defined element $\widetilde{f} \in \underrightarrow{\lim}\;C_{b}(\mathbb{A}_{r}) $, by setting 
 $ \widetilde{f}:= \pi_0(F)$, where $r(F)=f$. 
Also, the above shows that 
$$
\|\widetilde{f}\|_{\underrightarrow{\lim}\;C_{b}(\mathbb{A}_{r}) }
= \|f\|_{C(\beta\mA_0 \setminus \mA_0)}.
$$ 
Hence we have the isometric embedding 
$$
f \mapsto \widetilde{f} : C(\beta\mA_0 \setminus \mA_0) \hookrightarrow 
\underrightarrow{\lim}\;C_{b}(\mathbb{A}_{r}).
$$
Next, we show that the above mapping is surjective. To this end, take any 
$[(f_r)] \in \underrightarrow{\lim}\;C_{b}(\mathbb{A}_{r})$. We can take any $f_r \in C_b(\mA_r)$ (with $r$ 
large enough so that 
each $f_R$ for $R>r$ is a restriction of $f_r$) and associate with such an $f_r$ an element 
$f\in C_b(\mA_0)$ by setting 
$$
f(z)= \left\{ \begin{array}{ll} f(z) & \textrm{for } \frac{1+r}{2} \leq |z|<1 , \\
f_r\left(\left( r+|z| \cdot \frac{1-r}{1+r} \right) \frac{z}{|z|} \right) &  
\textrm{for } 0<|z|<\frac{1+r}{2},  \end{array}\right.
$$
 and then we can restrict this $f$ to $C(\beta \mA_0 \setminus \mA_0)$.  
This function in 
$C(\beta \mA_0 \setminus \mA_0)$, which we denote again by $f$, maps to a well-defined element $\widetilde{f}\in 
 \underrightarrow{\lim}\;C_{b}(\mathbb{A}_{r})$, and it follows from our construction that $\widetilde{f}= [(f_r)]$. 
\end{proof}

%
 
In order to view $\underrightarrow{\lim}\; C_{b}(\mathbb{A}_{r})$ as a subalgebra of $L^\infty(\mT)$, we 
will recall first the definition of the generalized argument principle for a closed subset $Y$ of 
the maximal ideal space ${\mathfrak{M}}(S)$ of a Banach algebra $S$. This notion was introduced in  
 \cite[Definition~2.1]{Moh}. 

\begin{definition}
  As before, let ${\mathfrak{M}}(S)$ denote the maximal ideal space of a unital commutative
  Banach algebra $S$. A closed subset $Y \subset {\mathfrak{M}}(S)$ is said to
  satisfy the {\em generalized argument principle} for $S$ if whenever
  $a \in S$ and $\log \widehat{a} $ is defined continuously on $Y$,
  then $a$ is invertible in $S$. (Here $\widehat{a}$ denotes the
  Gelfand transform of $a$, $Y$ is equipped with the topology it
  inherits from ${\mathfrak{M}}(S)$ and ${\mathfrak{M}}(S)$ has the usual Gelfand topology.)
\end{definition}

It was shown in \cite[Theorem~2.2]{Moh} that any $Y$ satisfying the
generalized argument principle is a boundary for $S$ and so it
contains the \v{S}ilov boundary of $S$.

We have  
$$
Y:={\mathfrak{M}}\left(\underrightarrow{\lim}\; C_{b}(\mathbb{A}_{r})\right)\subset 
{\mathfrak{M}}(H^\infty).
$$
Now suppose that $f\in H^\infty \bigcap \inv\left( \underrightarrow{\lim}\; C_{b}(\mathbb{A}_{r})\right)$, 
and that $f=e^G$ for some $G\in C(Y)$. Then ${\tt I}_{C(Y)}(f)$ is the zero element of the group 
$\inv \left(\displaystyle \lim_{\longrightarrow} C_b(\mA_r)\right)/ 
\exp \left(\displaystyle \lim_{\longrightarrow} C_b(\mA_r)\right)$. But we know from \cite[p.263]{Mur}, 
that the topological index $W$ is of the form $W 
=\varphi \circ {\tt I}_{C(Y)}$ 
for some group homomorphism $\varphi$ from $\inv \left(\displaystyle \lim_{\longrightarrow} C_b(\mA_r)\right)/ 
\exp \left(\displaystyle \lim_{\longrightarrow} C_b(\mA_r)\right)$ to $\mZ$. Thus it follows that $W(f)=0$, and so 
$f$ is invertible as an element of $H^\infty$. So the set $Y$ satisfies the generalized argument principle. 
Hence $Y$ contains  the \v{S}ilov boundary of $H^\infty$, which in turn can be identified with the maximal 
ideal space of $L^\infty(\mT)$. 

Hence if $f\in \underrightarrow{\lim}\; C_{b}(\mathbb{A}_{r})$, then $\widehat{f} \in C(Y)$, and 
so $\widehat{f}|_{{\mathfrak{M}}(L^\infty(\mT))}$ determines an element of $L^\infty(\mT)$.  Thus we have 
 injective Banach algebra homomorphisms 
from $\underrightarrow{\lim}\; C_{b}(\mathbb{A}_{r})$ to $L^\infty(\mT)$: 
 $
H^\infty  \subset \underrightarrow{\lim}\; C_{b}(\mathbb{A}_{r}) \subset L^\infty(\mT).
$

We remark that by taking the harmonic extension of $f\in L^\infty(\mT)$ \cite[Lemma~6.43]{Dou}, we obtain 
a continous function $f^{h}$ on the disk, which gives an element $\pi_0(f^h)$ of 
$\underrightarrow{\lim}\;C_{b}(\mathbb{A}_{r})$. However, although the map $f\mapsto f^h$ 
is an injective linear contraction \cite[Lemma~6.44]{Dou}, it is not a Banach algebra 
homomorphism, because the map is not  multiplicative.


\begin{thebibliography}{99}                                                                                               %

\bibitem {BS12}J.A. Ball and A.J. Sasane. Extension of the $\nu$-metric,
\emph{Complex Analysis and Operator Theory}, 6:65-89, 2012.

\bibitem{CZ} R.F. Curtain and H.J. Zwart. \emph{An introduction to infinite-dimensional 
linear systems theory}, Texts in Applied Mathematics 21,  Springer-Verlag, New York, 1995.  

\bibitem{Dou} 
R.G. Douglas. 
\emph{Banach algebra techniques in operator theory},  
Second edition, Graduate Texts in Mathematics 179, Springer-Verlag, New York, 1998.


\bibitem {LOT99}\emph{Lectures on operator theory}. Edited by R. Bhat, G.A.
Elliott, P.A. Fillimore. Fields Institute Monographs, 13, American
Mathematical Society, Providence, RI, 1999.

\bibitem{Moh}
M. Naghshineh-Ardjmand. 
Generalized argument principle for commutative Banach algebras, 
\emph{Journal of the London Mathematical Society (2)}, 18:140-146, no. 1, 1978.

\bibitem{Mur}
G.J. Murphy. 
Topological and analytical indices in $C^∗$-algebras, 
\emph{Journal of Functional Analysis},  234:261-276, no. 2, 2006. 

\bibitem {PS}J.R. Partington and G.K. Sankaran. Algebraic construction of
normalized coprime factors for delay systems. \emph{Math. Control Signals
Systems}, 15:1-12, 2002.

\bibitem{ParB}
J.R. Partington. 
\emph{Linear operators and linear systems.  
An analytical approach to control theory},  London Mathematical Society Student Texts 60,  
Cambridge University Press, Cambridge, 2004.


\bibitem {Rud}W. Rudin. \emph{Real and complex analysis}. Third edition.
McGraw-Hill, New York, 1987.

\bibitem {Rud91}W. Rudin. \emph{Functional analysis}. Second edition.
International Series in Pure and Applied Mathematics, McGraw-Hill, New York, 1991.

\bibitem {Sas12}A.J. Sasane. Extension of the $\nu$-metric for stabilizable
plants over $H^{\infty}$. \emph{Mathematical Control and Related Fields},
2:29-44, no. 1, 2012.

\bibitem {Ull08}D.C. Ullrich. \emph{Complex made simple}. Graduate Studies in
Mathematics, 97, American Mathematical Society, Providence, RI, 2008.

\bibitem {Vin93}G. Vinnicombe.  Frequency domain uncertainty and the graph
topology. \emph{IEEE Transactions on Automatic Control}, 38:1371-1383, 1993.

\bibitem{VinB} 
G. Vinnicombe. 
\emph{Uncertainty and feedback. $H^\infty$  loop-shaping and the $\nu$-gap metric}, 
 Imperial College Press, London, 2001.

\bibitem {Weg93}N.E. Wegge-Olsen. \emph{$K$-Theory and $C^{\ast}$-Algebras: A
Friendly Approach. }  Oxford University Press, Oxford, 1993.
\end{thebibliography}
\end{document}